\newcommand{\RR}{{\mathbb R}}
\newtheorem{thm}{Theorem}
\newtheorem{prop}{Proposition}[section]
\newtheorem{lem}[prop]{Lemma}
\newtheorem{rem}{Remark}[section]
\newtheorem{defn}[prop]{Definition}
\numberwithin{equation}{section}
\numberwithin{prop}{section}
\def\squarebox#1{\hbox to #1{\hfill\vbox to #1{\vfill}}}
\def\R{\mathbb R}
\def\reals{\mathbb R}
\def\reals{{\mathbb R}}
\def\be{\begin{eqnarray*}}
\def\ee{\end{eqnarray*}}
\def\ben{\begin{eqnarray}}
\def\een{\end{eqnarray}}
\title
[Stable perturbations for a minimal mass soliton]
{A class of stable perturbations for a minimal mass soliton in three dimensional saturated nonlinear Schr\"odinger equations}
\author[J.L. Marzuola]
{Jeremy L. Marzuola}
\address{Applied Mathematics Department, Columbia University \\
200 S. W. Mudd, 500 W. 120th St., New York City, NY 10027, USA}
\begin{document}    
   
\begin{abstract}

In this result, we develop the techniques of \cite{KS1} and \cite{BW} in order to determine a class of stable perturbations for a minimal mass soliton solution of a saturated, focusing nonlinear Schr\"odinger equation
\begin{eqnarray*}
\left\{\begin{array}{c}
i u_t + \Delta u + \beta (|u|^2) u = 0 \\
u(0,x) = u_0 (x),
\end{array} \right.
\end{eqnarray*}
in $\reals^3$.  By projecting into a subspace of the continuous spectrum of $\mathcal{H}$ as in \cite{S1}, \cite{KS1}, we are able to use a contraction mapping similar to that from \cite{BW} in order to show that there exist solutions of the form
\begin{eqnarray*}
e^{i \lambda_{\min} t} (R_{min} + e^{i \mathcal{H} t} \phi + w(x,t)),
\end{eqnarray*}
where $e^{i \mathcal{H} t} \phi + w(x,t)$ disperses as $t \to \infty$.  Hence, we have long time persistance of a soliton of minimal mass despite the fact that these solutions are shown to be nonlinearly unstable in \cite{CP1}.

\end{abstract}

\maketitle    

\section{Introduction}

In this result, we develop the dipsersive estimates used to prove stability of solitons for a focusing, saturated nonlinear Schr\"odinger equation (NLS) in $\R \times \R^d$:
\begin{eqnarray*}
i u_t + \Delta u + \beta (|u|^2) u & = & 0 \\
u(0,x) & = & u_0 (x),
\end{eqnarray*}
where $\beta: \reals \to \reals$, $\beta(s) \geq 0$ for all $s \in \reals$, $\beta$ has a specific structure outlined in the following definitions:

\begin{defn}
\label{def:non1}
Saturated nonlinearities of type $1$ are of the form
\begin{eqnarray}
\label{sat:eqn1}
\beta (s) = s^{\frac{q}{2}} \frac{s^{\frac{p-q}{2}}}{1 + s^{\frac{p-q}{2}}}, 
\end{eqnarray}
where $p > 2+\frac{4}{d}$ and $\frac{4}{d}  > q > 0$ for $d \geq 3$ and $\infty > p > 2+\frac{4}{d} > \frac{4}{d} > q > 0$ for $d < 3$.
\end{defn}    

\begin{defn}
\label{def:non2}
Saturated nonlinearities of type $2$ are of the form
\begin{eqnarray}
\label{sat:eqn2}
\beta(s) = \frac{s}{(1 + s)^{\frac{2-q}{2}}},
\end{eqnarray}
where $\frac{4}{d} > q >0$, $d > 2$. 
\end{defn}

\begin{rem}
In both cases, for $|u|$ large, the behavior is $L^2$ subcritical and for $|u|$ small, the behavior is $L^2$ supercritical.  For Definition \ref{def:non1}, $p$ is chosen much larger than the $L^2$ critical exponent, $\frac{4}{d}$ in order to allow sufficient regularity when linearizing the equation.  
\end{rem}

In the sequel, we assume that $u_0 \in H^1$ and $|x|u_0 \in L^2$, or in other words, $u_0$ has finite variance.  For this initial data, from the spatial and phase invariance of NLS, we have many the following conserved quantities:

Conservation of Mass (or Charge):
\begin{eqnarray*}
Q (u) = \frac{1}{2} \int_{\R^n} |u|^2 dx = \frac{1}{2} \int_{\R^d} |u_0|^2 dx,
\end{eqnarray*}

and

Conservation of Energy:
\begin{eqnarray*}
E(u) = \int_{\R^d} | \nabla u |^2 dx - \int_{\R^d} G(|u|^2) dx = \int_{\R^d} | \nabla u_0 |^2 dx - \int_{\R^d} G(|u_0|^2) dx,
\end{eqnarray*}
where 
\begin{eqnarray*}
G(t) = \int_0^t \beta(s) ds.
\end{eqnarray*}

We also have the pseudoconformal conservation law:
\begin{eqnarray}
\| (x + 2 i t \nabla ) u \|^2_{L^2} - 4 t^2 \int_{\R^d} G(|u|^2) dx = \| x \phi \|^2_{L^2} - \int_0^t \theta (s) ds,
\end{eqnarray}
where 
\begin{eqnarray*}
\theta (s) = \int_{\R^d} (4 (d+2) G(|u|^2) - 4 d \beta(|u|^2) |u|^2) dx.
\end{eqnarray*}
Note that $(x + 2 i t \nabla )$ is the Hamilton flow of the linear Schr\"odinger equation, so the above identity relates how the solution to the nonlinear equation is effected by the linear flow.

Detailed proofs of these conservation laws can be arrived at easily using energy estimates or Noether's Theorem, which relates conservation laws to symmetries of an equation.  Global well-posedness in $L^2$ of (NLS) with $\beta$ of type $1$ or $2$ for finite variance initial data follows from standard theory for $L^2$ subcritical monomial nonlinearities.  Proofs of the above results can be found in numerous excellent references for (NLS), including \cite{Caz} and \cite{SS}.

{\sc Acknowledgments.} This paper is a result of a thesis done under the direction of Daniel Tataru at the University of California, Berkeley.  It is fair to say this work would not exist without his assistance.  The work was supported by Graduate Fellowships from the University of California, Berkeley and NSF grants DMS0354539 and DMS0301122.  In addition, the author spent a semester as an Associate Member of MSRI during the development of these results.  Currently, the author is supported by an NSF Postdoctoral Fellowship.

\section{Soliton Solutions}

A soliton solution is of the form 
\begin{eqnarray*}
u(t,x) = e^{i \lambda t} R_\lambda(x)
\end{eqnarray*} where 
$\lambda > 0$ and $R_\lambda (x)$ is a positive, radially symmetric, exponentially decaying solution of the equation:
\begin{eqnarray}
\label{eqn:sol}
\Delta R_\lambda - \lambda R_\lambda + \beta (R_\lambda ) R_\lambda = 0.
\end{eqnarray}
With this type of nonlinearity, soliton solutions exist and are known to be unique.  Existence of solitary waves for nonlinearities of the type presented in Definitions \ref{def:non1} and \ref{def:non2} is proved by in \cite{BeLi} by minimizing the functional 
$$T(u) = \int | \nabla u |^2 dx$$ 
with respect to the functional 
$$V(u) = \int [ G(|u|^2) - \frac{\lambda}{2} |u|^2 ] dx.$$  
Then, using a minimizing sequence and Schwarz symmetrization, one sees the existence of the nonnegative, spherically symmetric, decreasing soliton solution.  For uniqueness, see \cite{Mc}, where a shooting method is implemented to show that the desired soliton behavior only occurs for one particular initial value. 

An important fact is that $Q_{\lambda} = Q(R_{\lambda})$ and $E_{\lambda} = E(R_{\lambda})$ are differentiable with respect to $\lambda$.  This fact can be determined from the early works of Shatah, namely \cite{Sh1}, \cite{Sh2}.  By differentiating Equation \eqref{eqn:sol}, $Q$ and $E$ with respect to $\lambda$, we have
\begin{eqnarray*}
\partial_{\lambda} E_{\lambda} = - \lambda \partial_{\lambda} Q_{\lambda}.
\end{eqnarray*}

Numerics show that if we plot $Q_{\lambda}$ with respect to $\lambda$, we get a curve that goes to $\infty$ as $\lambda \to 0, \infty$ and has a global minimum at some $\lambda = \lambda_0 > 0$, see Figure \ref{f:solcurves}.  We will explore this in detail in a subsequent numerical work \cite{Mnum}.  Variational techniques developed in \cite{GSS} and \cite{ShSt} tell us that when $\delta ( \lambda ) = E_{\lambda} + \lambda Q_{\lambda}$ is convex, or $\delta '' (\lambda) > 0$, we are guaranteed stability under small perturbations, while for $\delta '' (\lambda) < 0$ we are guaranteed that the soliton is unstable under small perturbations.  For brief reference on this subject, see \cite{SS}, Chapter 4.  For nonlinear instability at a minimum, see \cite{CP1}.  For notational purposes, we refer to a minimal mass soliton as $R_{min}$.

\begin{figure}
\scalebox{0.38}{\includegraphics{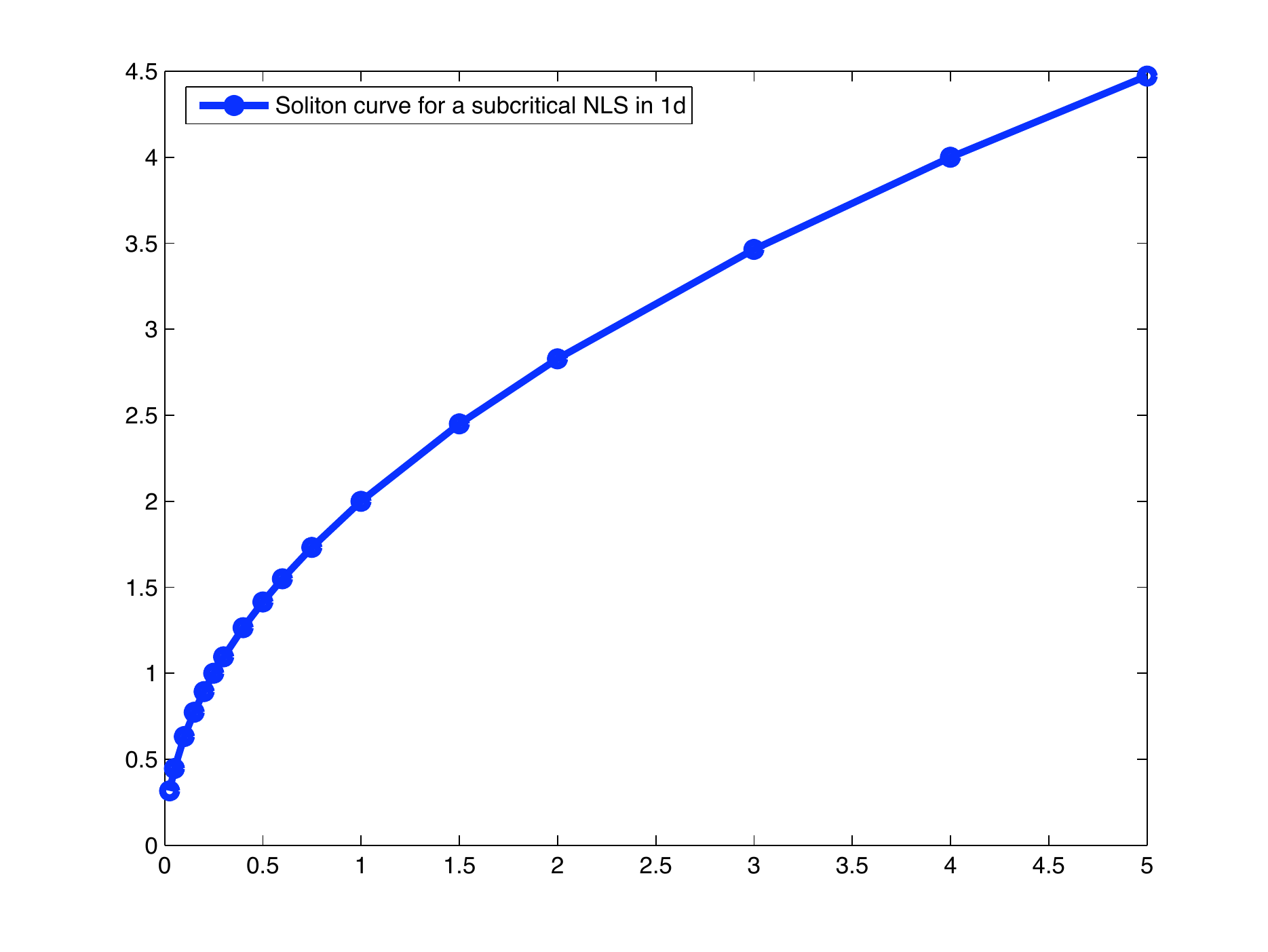}} \hfill \scalebox{0.38}{\includegraphics{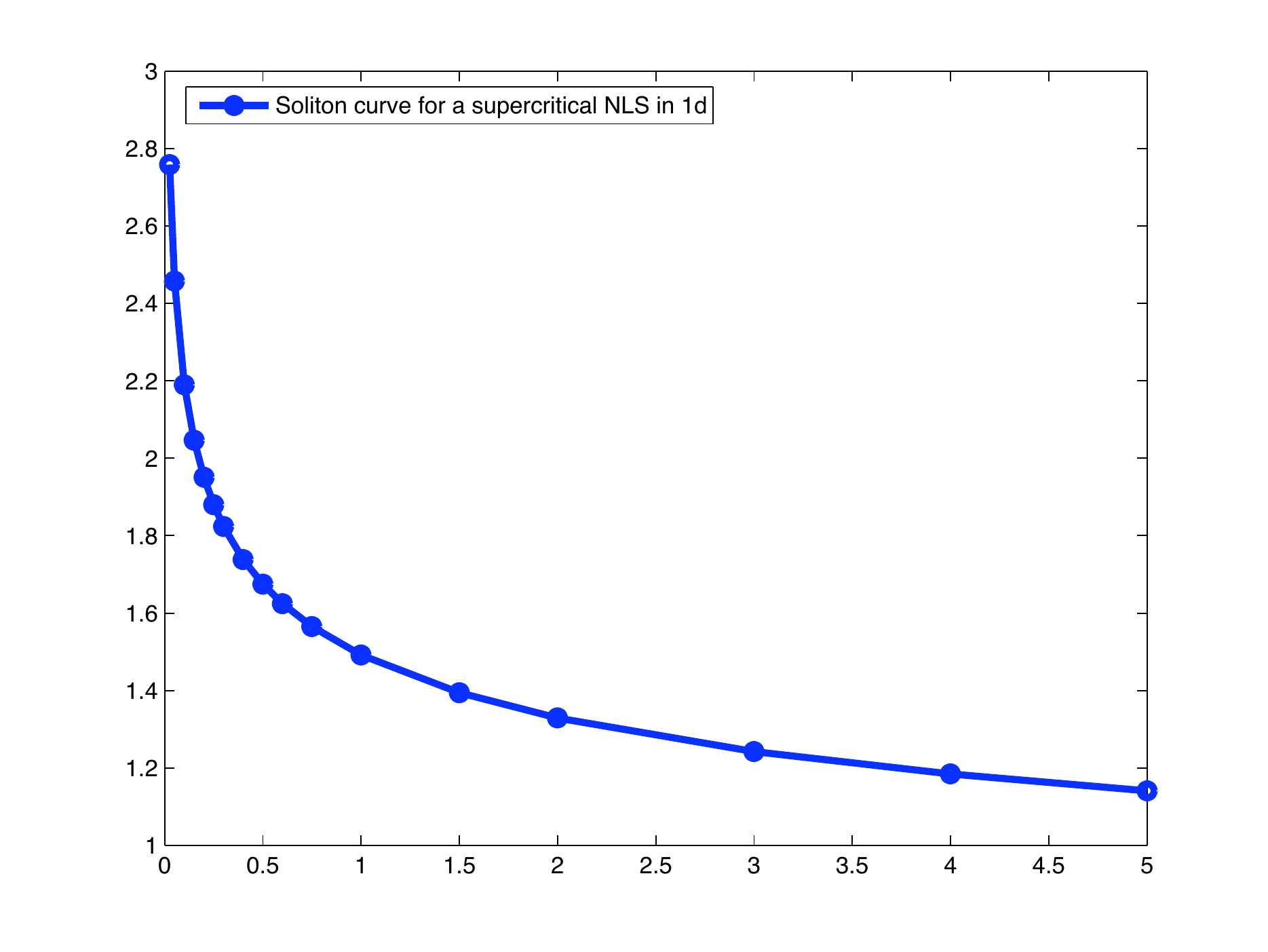}} 
\scalebox{0.38}{\includegraphics{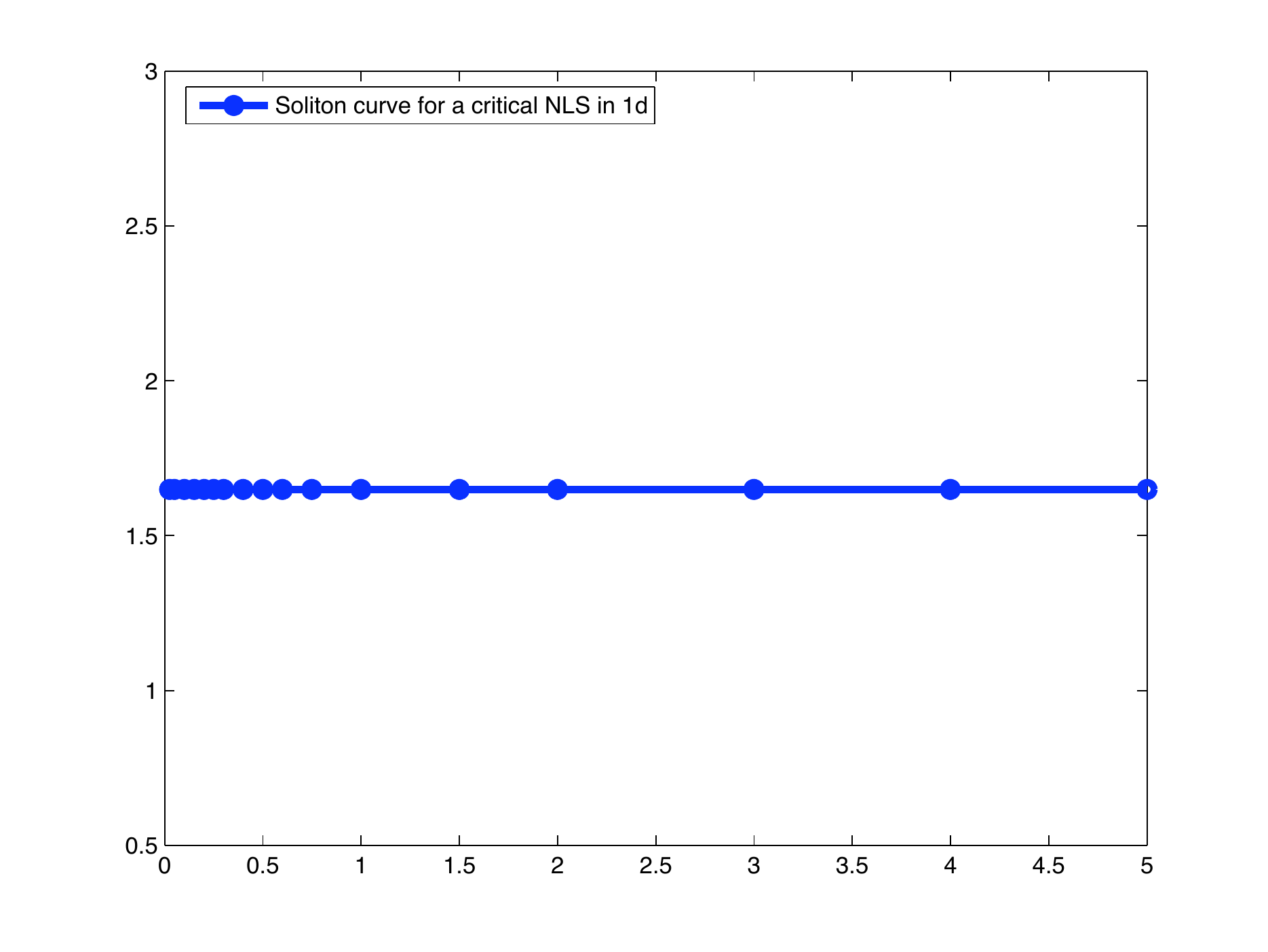}} \hfill \scalebox{0.38}{\includegraphics{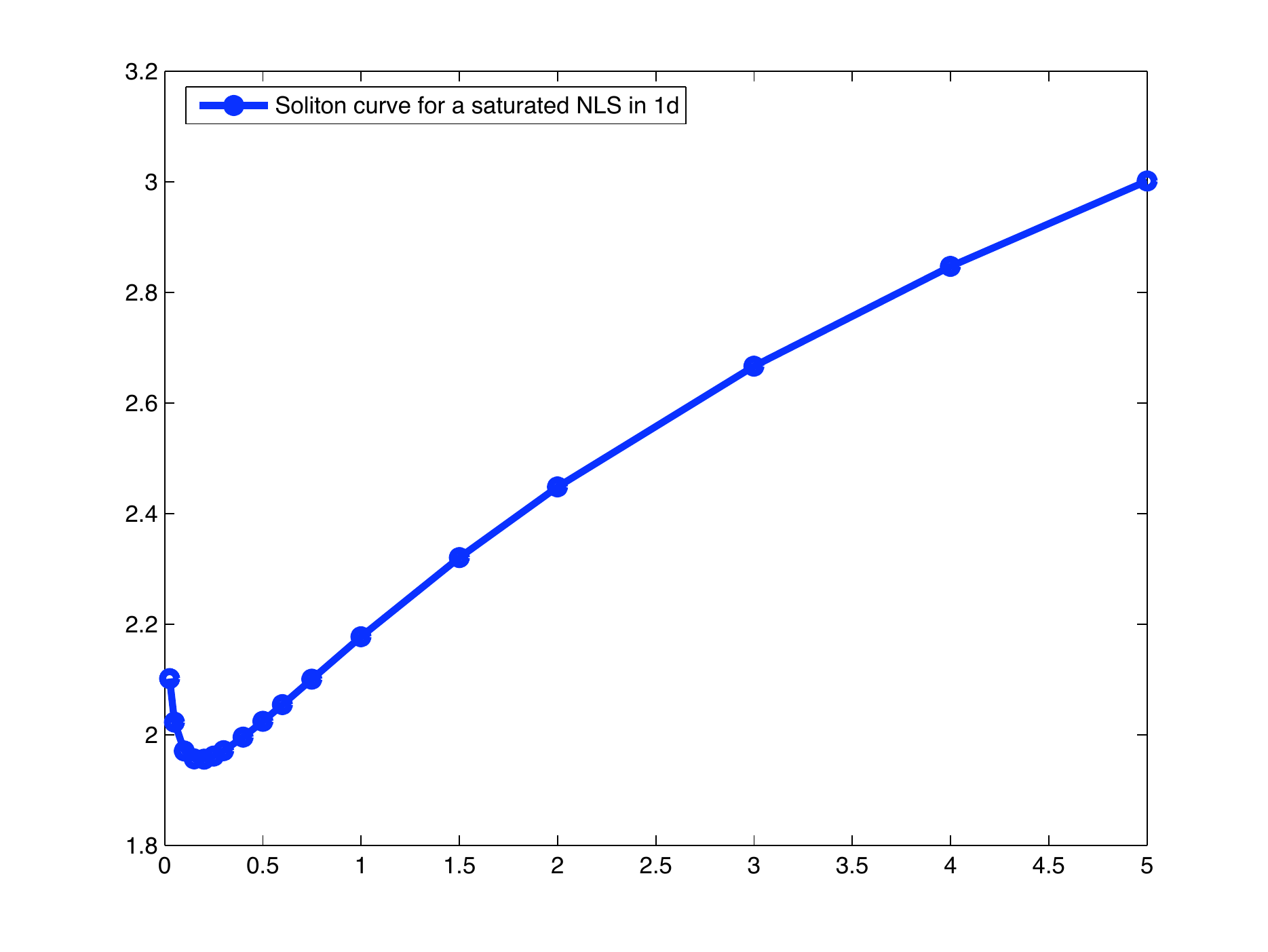}} 
\scalebox{0.38}{\includegraphics{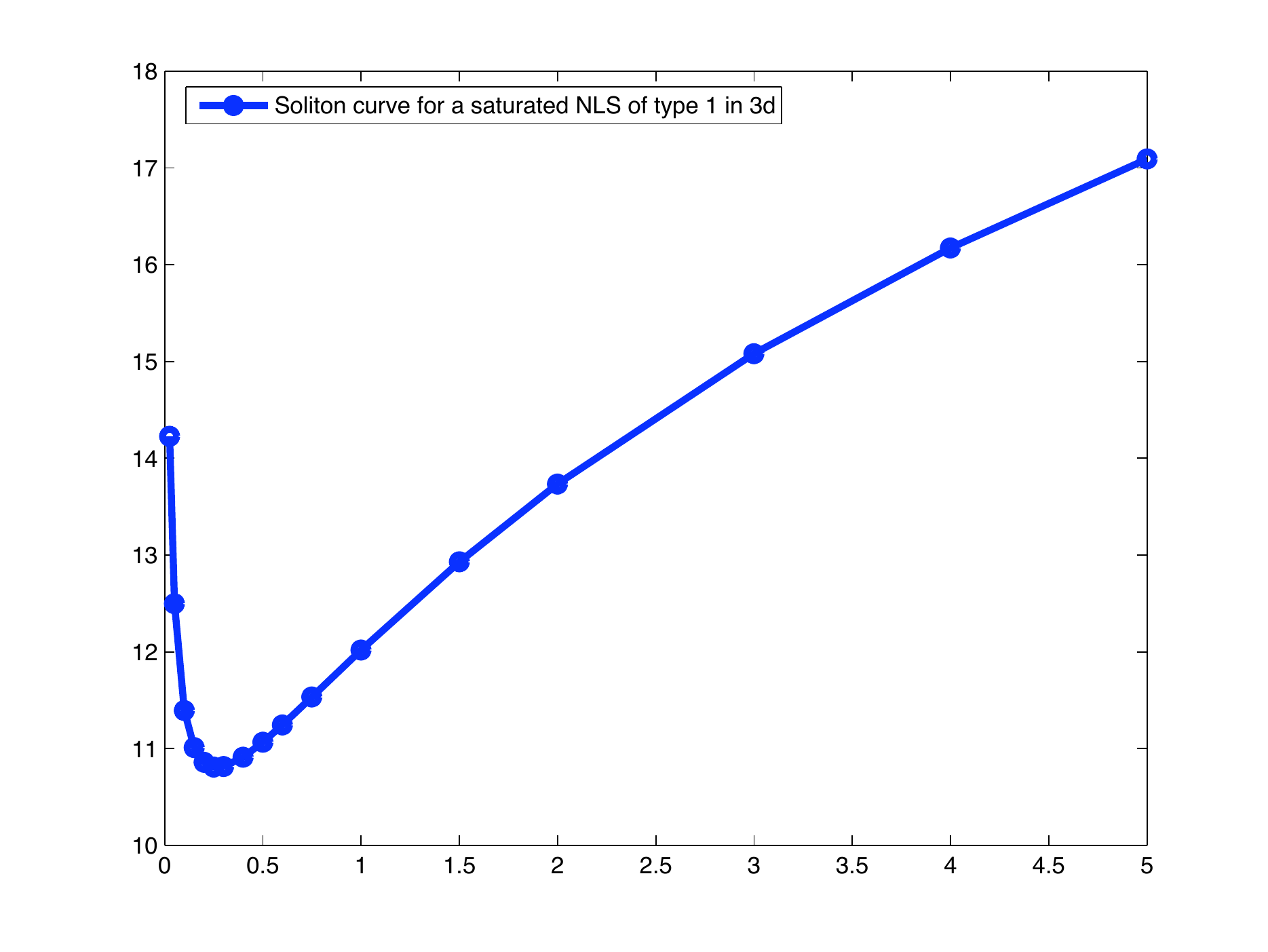}} \hfill \scalebox{0.38}{\includegraphics{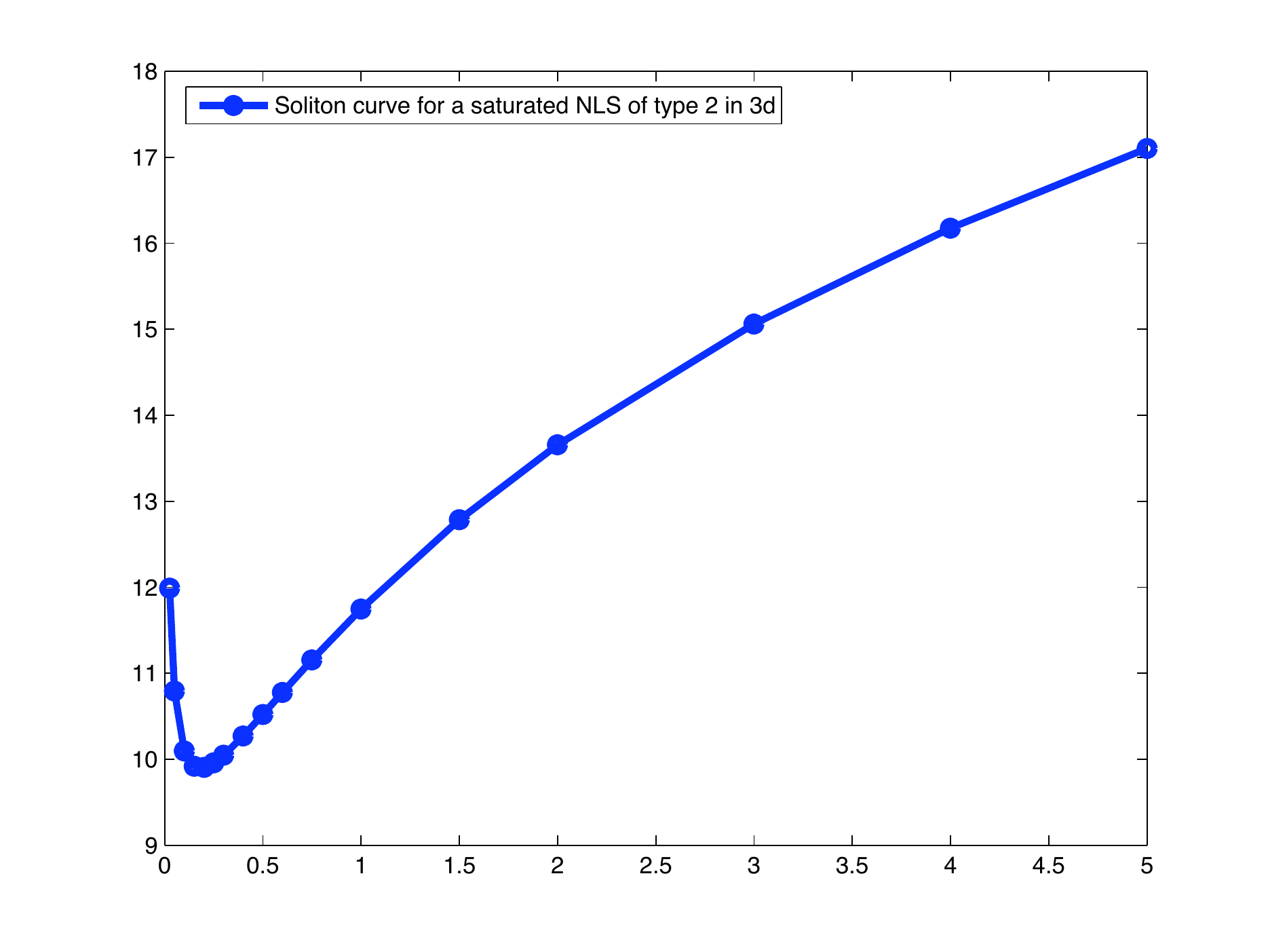}}

\caption{Plots of the soliton curves ($Q(\lambda)$ with respect to $\lambda$) for a subcritical nonlinearity ($d=1$, $p=3$), supercritical nonlinearity ($d=3$, $p=3$), critical nonlinearity ($d=1$, $p=5$), saturated nonlinearity of type $1$ ( $p=7$, $q=3$) in $\reals$, saturated nonlinearity of type $1$ in $3d$ ($p=4$, $q=2$), saturated nonlinearity of type $2$ in $\reals^3$ ($q=2$).  The curves for the monomial nonlinearities are found analytically, while the curves for the saturated nonlinearities are found numerically.}
\label{f:solcurves}
\end{figure}

\section{Linearization about a Soliton}

Let us write down the form of NLS linearized about a soliton solution.  First of all, we assume we have a solution $\psi = e^{i \lambda t}(R_{\lambda} + \phi(x,t))$.  For simplicity, set $R = R_\lambda$.  Inserting this into the equation we know that since $\phi$ is a soliton solution we have 
\begin{eqnarray}
i (\phi)_t + \Delta (\phi) & = & -\beta( R^2) \phi - 2 \beta'( R^2 )  R^2 \text{Re}(\phi) + O (\phi^2),
\end{eqnarray}
by splitting $\phi$ up into its real and imaginary parts, then doing a Taylor Expansion. Hence, if $\phi = u + iv$, we get
\begin{eqnarray}
\partial_t \left( \begin{array}{c}
u \\
v
\end{array} \right) = \mathcal{H} \left( \begin{array}{c}
u \\
v
\end{array} \right),
\end{eqnarray}
where 
\begin{eqnarray}
\mathcal{H} =  \left( \begin{array}{cc}
0 & L_{-} \\
-L_{+} & 0
\end{array} \right),
\end{eqnarray}
where $$L_{-} = - \Delta + \lambda - \beta( R_\lambda )$$ and 
$$L_{+} = - \Delta + \lambda - \beta( R_\lambda ) - 2 \beta' (R^2_\lambda) R_\lambda^2.$$

\begin{defn}
\label{spec:defn1}
A Hamiltonian, $\mathcal{H}$ is called admissible if the following hold: \\
1)  There are no embedded eigenvalues in the essential spectrum, \\
2)  The only real eigenvalue in $[-\lambda, \lambda ]$ is $0$, \\
3)  The values $\pm \lambda$ are not resonances. \\
\end{defn}

\begin{defn}
\label{spec:defn2}
Let (NLS) be taken with nonlinearity $\beta$.  We call $\beta$ admissible if there exists a minimal mass soliton, $R_{min}$, for (NLS) and the Hamiltonian, $\mathcal{H}$, resulting from linearization about $R_{min}$ is admissible in terms of Definition \ref{spec:defn1}.
\end{defn}

The spectral properties we need for the linearized Hamiltonian equation in order to prove stability results are precisely those from Definition \ref{spec:defn1}.  Notationally, we refer to $P_d$ and $P_c$ as the projections onto the discrete spectrum of $\mathcal{H}$ and onto the continuous spectrum of $\mathcal{H}$ respectively.

Analysis of these spectral conditions will be done both numerically and analytically in the forthcoming work \cite{Mspec}.

\section{Review of Dispersive Estimates}

We review here the disersive estimates from \cite{Mlin}.  Let $\mathcal{S}$ be the Schwartz class of functions.  Then, we have the following results:  

\begin{thm}
\label{thm:tdec}
Given an admissible Hamiltonian $\mathcal{H}$, $P_c$ the projection on the continuous spectrum of $\mathcal{H}$, for initial data $\phi \in \mathcal{S}$, we have
\begin{eqnarray*}
\| e^{it\mathcal{H}} P_c \phi \|_{L^\infty} \leq t^{-\frac{d}{2}}.
\end{eqnarray*}
\end{thm}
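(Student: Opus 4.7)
The plan is to prove Theorem \ref{thm:tdec} via the Dunford functional calculus, combined with a careful resolvent analysis on the essential spectrum of $\mathcal{H}$, which consists of the two rays $(-\infty, -\lambda] \cup [\lambda, \infty)$. First I would represent the evolution by a contour integral
\begin{eqnarray*}
e^{it\mathcal{H}} P_c \phi = \frac{1}{2\pi i} \int_\Gamma e^{itz} (\mathcal{H} - z)^{-1} \phi \, dz,
\end{eqnarray*}
with $\Gamma$ enclosing the continuous spectrum, and then collapse $\Gamma$ onto the two boundary traces of the resolvent from above and below the cuts. Writing $\mathcal{H} = \mathcal{H}_0 + V$, where $\mathcal{H}_0$ is the constant-coefficient Hamiltonian obtained by discarding the $R_\lambda$-dependent potential terms and $V$ is the matrix potential localized near $R_{min}$ through the exponential decay of the soliton, the resolvent identity
\begin{eqnarray*}
(\mathcal{H} - z)^{-1} = (\mathcal{H}_0 - z)^{-1} - (\mathcal{H}_0 - z)^{-1} V (\mathcal{H} - z)^{-1}
\end{eqnarray*}
allows a finite Born expansion. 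The free resolvent diagonalizes into scalar Schr\"odinger resolvents $(-\Delta + \lambda - \zeta)^{-1}$, whose explicit kernel and the corresponding free dispersive bound of order $t^{-d/2}$ are well understood.

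Next I would split the integral into a high-energy piece $|z| \geq \Lambda \gg \lambda$ and a low-energy piece inside the cuts. At high energy, $V(\mathcal{H}_0 - z)^{-1}$ is small in operator norm on suitable weighted $L^2$ spaces, so the full Born series converges, and each term yields $t^{-d/2}$ decay either by the scalar free dispersive bound or by stationary phase in the spectral parameter against the oscillation $e^{itz}$. For the low-energy portion I would invoke the limiting absorption principle for $(\mathcal{H} - z)^{-1}$ in weighted Sobolev spaces. This is precisely where admissibility, Definition \ref{spec:defn1}, is indispensable: the absence of embedded eigenvalues in the essential spectrum ensures that $I + V(\mathcal{H}_0 - (z \pm i0))^{-1}$ is invertible on the interior of each cut, and Fredholm theory then yields uniform operator bounds on the perturbed resolvent trace away from the thresholds.

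The principal obstacle will be the analysis at the thresholds $z = \pm \lambda$, where the free resolvent develops a square-root branch singularity in $\sqrt{z \mp \lambda}$. Here the no-threshold-resonance condition $(3)$ of admissibility is crucial: it ensures that the Birman-Schwinger type operator $V(\mathcal{H}_0 - z)^{-1}$ remains boundedly invertible as a perturbation of the identity on the relevant weighted space right up to and including the thresholds. After factoring out the resolvent singularity and performing one integration by parts in $z$ against $e^{itz}$ on a small neighborhood of each threshold, the remaining integrals become absolutely convergent and deliver the required $t^{-d/2}$ factor, while the weighted $L^2 \to L^\infty$ mapping properties of $(\mathcal{H}_0 - z)^{-1}$ furnish the spatial pointwise control. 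Summing the high-energy contribution, the bulk contribution from the limiting absorption principle, and the two threshold contributions then yields the claimed pointwise decay of $e^{it\mathcal{H}} P_c \phi$ in $L^\infty$.
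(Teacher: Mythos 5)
The paper does not actually prove Theorem \ref{thm:tdec} here: it is imported from the companion work \cite{Mlin}, and the author explicitly contrasts two possible proofs --- the resolvent-expansion route of Erdogan--Schlag \cite{ES1} versus the scattering-theoretic, distorted-Fourier-basis route actually taken in \cite{Mlin}. Your proposal is a competent outline of the first of these, not of the paper's own method. The two are genuinely different: you represent $e^{it\mathcal{H}}P_c$ as a contour integral of the resolvent, run a Born series at high energy, a limiting absorption principle in the interior of the cuts (using the absence of embedded eigenvalues), and a threshold analysis at $\pm\lambda$ (using the absence of resonances); the companion paper instead builds distorted plane waves $\tilde{\phi}_{\xi}$ for $\mathcal{H}$ and writes $e^{i\mathcal{H}t}P_c\phi = Q^{-1}e^{itW}Q\phi$ (cf.\ Equation \eqref{eqn:intrep}), reducing the decay to stationary phase in the distorted Fourier variable. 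Your placement of the three admissibility hypotheses is exactly right, and the resolvent route is the standard one for such matrix Hamiltonians in $d=3$; what the scattering route buys, per the author's own remark, is a natural formulation of the orthogonality conditions \eqref{eqn:mom1} and an easier path to the weighted estimate of Theorem \ref{thm:wlindec}. Two points in your sketch would need more care in execution: $\mathcal{H}$ is not self-adjoint, so the contour representation and the very definition of $P_c$ must be justified from the resolvent bounds rather than from the spectral theorem; and at the thresholds a single integration by parts against $e^{itz}$ only produces $t^{-1}$ --- to reach $t^{-3/2}$ you must exploit the $\sqrt{z\mp\lambda}$ expansion of the boundary resolvent (e.g.\ by passing to the square-root variable and applying the standard oscillatory-integral lemma), which is precisely where the non-resonance condition does its work.
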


\begin{thm}
\label{thm:wlindec}
Let $\mathcal{H}$ be an admissible Hamiltonian as defined above.  Let $\tilde{\phi}_{\xi}$ be the associated distorted Fourier basis.  Assume $\vec{\psi} \in L^{1,M}$ and
\begin{eqnarray}
\label{eqn:mom1}
\partial^\alpha_\xi \partial^\beta_{|\xi|} \vec{\Psi} (0) = 0,
\end{eqnarray}
for multi-indices $\alpha$, $\beta$ such that $|\alpha| + |\beta| = 0,1,2,\dots,2M$, where
\begin{eqnarray*}
\vec{\Psi} (\xi) = \int_y \tilde{\phi}_{\xi} (y) \vec{\psi} (y) dy.
\end{eqnarray*}  
Then,
\begin{eqnarray}
\label{eqn:lindecw}
\| e^{-c |x|} e^{i t \mathcal{H}} P_{c} \vec{\psi} \|_{L^\infty} \leq C t^{-\frac{d}{2}-M} \| \vec{\psi} \|_{L^{1,M}},
\end{eqnarray}
for any $c > 0$.
\end{thm}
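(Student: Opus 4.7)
\medskip

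\noindent\textbf{Proof proposal.} The plan is to represent the propagator through the distorted Fourier transform associated to $\mathcal{H}$ and then extract the improved decay from the vanishing-moment hypothesis \eqref{eqn:mom1} by a stationary-phase argument localized near the edge of the essential spectrum. Throughout I use the admissibility of $\mathcal{H}$ (Definition \ref{spec:defn1}), which guarantees that a distorted Fourier calculus exists, that the only spectral singularities to worry about are at $\pm \lambda$, and that these edges are not resonances.

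Concretely, I would begin by writing, on $P_c \vec{\psi}$,
\begin{eqnarray*}
(e^{it\mathcal{H}} P_c \vec{\psi})(x) = \int_{\R^d} \tilde{\phi}_\xi(x)\, e^{it\omega(\xi)}\, \vec{\Psi}(\xi)\, d\xi,
\end{eqnarray*}
where $\omega(\xi) = \pm(|\xi|^2 + \lambda)$ is the dispersion relation for the two branches of continuous spectrum (summation over the two branches is implicit). I then fix a smooth cutoff $\chi$ supported in a small ball around $\xi = 0$ and split $\vec{\Psi} = \chi \vec{\Psi} + (1-\chi) \vec{\Psi}$, separating the degenerate critical point of $\omega$ (which sits exactly at the threshold) from the bulk.

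On the high-frequency piece $(1-\chi)\vec{\Psi}$, the phase $t\omega(\xi)$ has non-degenerate stationary points and the argument already used for Theorem \ref{thm:tdec} applies: non-stationary phase and integration by parts in $\xi$, combined with the compact-in-$x$ control of $\tilde{\phi}_\xi$ afforded by the weight $e^{-c|x|}$, deliver $t^{-d/2}$ decay with arbitrary additional polynomial gain, controlled by the $L^{1,M}$ norm of $\vec{\psi}$ through Plancherel-type identities for the distorted transform. This piece is easily at the level $t^{-d/2-M}$.

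The heart of the proof is the low-frequency piece $\chi\vec{\Psi}$. By the hypothesis \eqref{eqn:mom1}, all derivatives (in Cartesian and radial coordinates) of $\vec{\Psi}$ up to order $2M$ vanish at $\xi = 0$, so Taylor's theorem gives
\begin{eqnarray*}
|\vec{\Psi}(\xi)| \leq C\, |\xi|^{2M+1}\, \|\vec{\psi}\|_{L^{1,M}}
\end{eqnarray*}
on $\supp \chi$. Rescaling $\xi = \eta/\sqrt{t}$ converts the phase $e^{it|\xi|^2}$ into the $t$-independent Gaussian oscillation $e^{i|\eta|^2}$, contributes a Jacobian of $t^{-d/2}$, and absorbs the amplitude vanishing as $t^{-M-1/2}\, |\eta|^{2M+1}$. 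What remains is an oscillatory integral in $\eta$ whose convergence is secured by the rapid decay of $(1+|\eta|^2)^{-N}$ factors produced by finitely many integrations by parts against $e^{i|\eta|^2}$; here the weight $e^{-c|x|}$ is critical, because it is what allows us to replace the potentially unbounded $\xi$-derivatives of $\tilde{\phi}_\xi(x)$ near the threshold by uniform bounds on compacts in $x$.

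The main obstacle I anticipate is not the stationary-phase rescaling itself but the quantitative threshold behavior of the distorted eigenfunctions: one needs bounds of the form $|e^{-c|x|} \partial_\xi^\alpha \tilde{\phi}_\xi(x)| \leq C_\alpha$ uniformly down to $\xi = 0$, which is precisely what is delicate when $\pm\lambda$ is an edge of the essential spectrum of the non-selfadjoint $\mathcal{H}$. The non-resonance condition in Definition \ref{spec:defn1} is what makes these resolvent expansions regular at the threshold, and translating those expansions into pointwise control on $\tilde{\phi}_\xi$ is where the bulk of the technical work lies (and where the cited \cite{Mlin}, following \cite{S1,KS1}, does the heavy lifting).
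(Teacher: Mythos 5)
The paper does not actually prove Theorem~\ref{thm:wlindec}: the section it appears in is explicitly a \emph{review} of dispersive estimates imported from the companion preprint \cite{Mlin}, so there is no in-paper argument to compare yours against. That said, your outline matches the strategy the paper alludes to elsewhere (e.g.\ in the proof of Lemma~\ref{lem1HLS:thm1}): write $e^{it\mathcal{H}}P_c = Q^{-1}e^{itW}Q$ in the distorted Fourier representation, use nonstationary phase away from the spectral threshold, and use the vanishing of $\vec{\Psi}$ at $\xi=0$ near the threshold, with the weight $e^{-c|x|}$ absorbing the $x$-growth produced by repeated integration by parts and keeping the stationary point $\xi = x/(2t)$ pinned near the origin where the amplitude vanishes. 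You also correctly identify that the real technical content is the uniform-to-threshold control of $e^{-c|x|}\partial_\xi^\alpha\tilde{\phi}_\xi(x)$, which rests on the non-resonance condition in Definition~\ref{spec:defn1}; that is precisely what \cite{Mlin} is invoked for.

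Two quantitative points deserve care. First, your Taylor bound $|\vec{\Psi}(\xi)|\lesssim |\xi|^{2M+1}\|\vec{\psi}\|_{L^{1,M}}$ over-claims: vanishing of derivatives through order $2M$ together with $C^{2M}$ regularity of $\vec{\Psi}$ yields $|\vec{\Psi}(\xi)|\lesssim |\xi|^{2M}\sup|D^{2M}\vec{\Psi}|$, which under the rescaling $\xi=\eta/\sqrt{t}$ gives exactly the extra factor $t^{-M}$ and hence the stated rate $t^{-d/2-M}$; the extra power $|\xi|^{2M+1}$ would require one more derivative of $\vec{\Psi}$ than the hypotheses provide. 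Second, and relatedly, controlling $2M$ derivatives of $\vec{\Psi}$ by $\|\vec{\psi}\|_{L^{1,M}}$ is only consistent if $L^{1,M}$ is the space with weight $\langle x\rangle^{2M}$ (or if the distorted basis contributes the missing regularity in $\xi$); with the more common convention $\langle x\rangle^{M}\vec{\psi}\in L^1$ the hypothesis \eqref{eqn:mom1} already presupposes more smoothness of $\vec{\Psi}$ at $\xi=0$ than the stated norm guarantees. This mismatch is arguably present in the theorem statement itself, but your proof should make explicit which convention it is using, since the whole gain in decay hinges on it.
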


From Theorems \ref{thm:tdec} and \ref{thm:wlindec}, we have the following results:

\begin{thm}[Erdogan-Schlag,Bourgain]
\label{thm:disp1}

Let $P_c$ and $P_d$ be projections onto the continuous and discrete spectrum of $\mathcal{H}$ respectively.  Then,
\begin{eqnarray*}
(i) \ \| e^{it \mathcal{H}} P_c \phi \|_{H^1} & \leq & C \| \phi \|_{H^1} \\
(ii) \ \| e^{it \mathcal{H}} (P_c\phi) \|_{H^s} & \leq & C \| \phi \|_{H^s} \\
(iii) \ \| e^{it \mathcal{H}} (P_d\phi) \|_{H^s} & \leq & C (1 + |t|^3) \int e^{-c|x|} |\phi(x)| dx \\
(iv) \ \| |x|^\alpha e^{it \mathcal{H}} (P_c \phi) \|_{L^2} & \leq & C( \| |x|^\alpha \phi \|_{L^2} + (1+|t|^\alpha) \| \phi \|_{H^\alpha}) \\
(v) \ \| |x|^\alpha e^{it \mathcal{H}} (P_d \phi) \|_{L^2} & \leq & C (1+|t|^3) \int |\phi| e^{-c|x|} dx.
\end{eqnarray*}
\end{thm}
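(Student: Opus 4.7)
The plan is to split the five estimates according to whether $P_c$ or $P_d$ appears, and whether the bound is in Sobolev or weighted $L^2$. The $P_d$-estimates (iii) and (v) will follow from the finite-dimensional, nilpotent structure of $\mathcal{H}$ on $\mathrm{Ran}\,P_d$; the $P_c$-estimates (i), (ii), (iv) will follow from the distorted Fourier calculus of Theorem~\ref{thm:wlindec} combined with a commutator argument.

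For the continuous-spectrum Sobolev estimates (i) and (ii), I would use the distorted Fourier basis $\tilde{\phi}_\xi$ (which exists in bicontinuous form because admissibility rules out embedded eigenvalues and edge resonances). In this basis $e^{it\mathcal{H}}P_c$ becomes multiplication by a unimodular symbol, so $L^2$-boundedness is immediate. To upgrade to $H^s$, I would show that $\mathcal{H} P_c$ is equivalent to $(-\Delta+\lambda)P_c$ modulo exponentially localized bounded perturbations, so that $\|P_c f\|_{H^s} \simeq \|(1+|\mathcal{H}|)^{s/2} P_c f\|_{L^2}$, making the bound tautological. For the weighted estimate (iv), I would commute $|x|^\alpha$ through $e^{it\mathcal{H}}$ via Duhamel: since $[\mathcal{H},|x|^\alpha]$ is of order $|x|^{\alpha-1}\nabla$ plus exponentially localized terms, iterating Duhamel $\lceil\alpha\rceil$ times and invoking (i) and (ii) at each step produces the claimed $(1+|t|^\alpha)$ growth against $\|\phi\|_{H^\alpha}$.

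For the discrete-spectrum estimates, admissibility forces $0$ to be the unique point of $\Spec(\mathcal{H})\cap[-\lambda,\lambda]$, so $\mathcal{H}|_{\mathrm{Ran}\,P_d}$ is nilpotent. Writing $P_d$ as a Riesz contour integral shows that $\mathrm{Ran}\,P_d$ is finite-dimensional and its elements (together with those of $\mathrm{Ran}\,P_d^*$) are exponentially decaying generalized eigenfunctions — this is just elliptic regularity applied to $\mathcal{H}^N\psi=0$ and Agmon-type decay, using the fact that the potential $\beta(R_\lambda^2)$ inherits the exponential decay of $R_\lambda$. Therefore
\begin{equation*}
e^{it\mathcal{H}}P_d\phi \;=\; \sum_{k=0}^{N-1} \frac{(it)^k}{k!}\,\mathcal{H}^k P_d \phi,
\end{equation*}
and pairing $\phi$ against the exponentially decaying generalized eigenfunctions of $\mathcal{H}^*$ produces exactly the weighted $L^1$ integral of $\phi$ appearing on the right-hand side. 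The $H^s$ norms in (iii) and the $|x|^\alpha L^2$ norms in (v) of the generalized eigenfunctions themselves contribute only absolute constants.

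The main obstacle is pinning down the Jordan length $N$ at a minimal-mass soliton. At a generic stable soliton the generalized kernel is two-dimensional, spanned by $iR_\lambda$ and $\partial_\lambda R_\lambda$, which would give only linear-in-$t$ growth. At $\lambda=\lambda_{\min}$, however, the relation $\partial_\lambda E_\lambda = -\lambda\partial_\lambda Q_\lambda$ together with $\partial_\lambda Q_{\lambda_{\min}}=0$ allows the Jordan chain to lengthen, and the factor $(1+|t|^3)$ records the expectation that the Jordan block has length exactly four. Establishing this precise structure under the admissibility hypothesis is the nontrivial analytic input, drawn from the spectral analysis of \cite{Mspec}; once it is in hand, the remainder of the estimates is mechanical.
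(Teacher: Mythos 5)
First, be aware that the paper supplies no proof of this theorem: it appears in the ``Review of Dispersive Estimates'' section as a quotation from \cite{Mlin} (and \cite{ES1}), asserted to follow ``from Theorems \ref{thm:tdec} and \ref{thm:wlindec}.'' So your proposal can only be measured against that intended derivation. Your overall architecture --- distorted-Fourier/wave-operator calculus for the $P_c$ bounds, commutation with $|x|^\alpha$ and Duhamel iteration for (iv), and the finite-dimensional nilpotent structure of $\mathcal{H}$ on $\mathrm{Ran}\,P_d$ for (iii) and (v) --- is the standard one and is consistent with the representation $e^{i\mathcal{H}t}P_c\phi = Q^{-1}e^{itW}Q\phi$ that the paper invokes later in \eqref{eqn:intrep}. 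Your identification of the length-four Jordan block at $\lambda_{\min}$, forced by $\partial_\lambda Q_{\lambda_{\min}}=0$, as the source of the $(1+|t|^3)$ factor is correct and matches the paper's remark that $N_g(\mathcal{H})=2d+4$.

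That said, two steps you present as immediate are in fact the substantive content, and as written they are gaps. First, $\mathcal{H}$ is not self-adjoint, so ``in this basis $e^{it\mathcal{H}}P_c$ becomes multiplication by a unimodular symbol, so $L^2$-boundedness is immediate'' is circular: the distorted Fourier transform $Q$ is not unitary, and $L^2$ (hence $H^s$) boundedness of $e^{it\mathcal{H}}P_c$ is precisely the assertion that $Q$ and $Q^{-1}$ are bounded on those spaces, i.e.\ that $\mathcal{H}P_c$ is similar to a self-adjoint operator; that is the theorem being imported from \cite{Mlin} and \cite{ES1}, not a consequence of writing down the basis. Second, nilpotency of $\mathcal{H}|_{\mathrm{Ran}\,P_d}$ does not follow from Definition \ref{spec:defn1} as stated: the admissibility conditions exclude embedded eigenvalues, nonzero real eigenvalues in $[-\lambda,\lambda]$, and edge resonances, but say nothing about genuinely non-real eigenvalues, which for this matrix Hamiltonian occur in quadruples $\{\pm z,\pm\bar z\}$ and would make $e^{it\mathcal{H}}P_d$ grow exponentially, destroying (iii) and (v). You need the additional spectral input (deferred in the paper to \cite{Mspec}; cf.\ \cite{CP1}) that the discrete spectrum reduces to the generalized kernel. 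Granting those two inputs, the remainder of your argument --- Agmon decay of the generalized eigenfunctions, the truncated exponential series for $e^{it\mathcal{H}}P_d$, and the iterated commutator estimate for (iv) --- is sound.
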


\begin{thm}
\label{thm:strich1}
For $p$ and $p'$ such that $\frac{1}{p} + \frac{1}{p'} = 1$, with $2 \leq p \leq \infty$, and $t \neq 0$, the transformation $e^{i \mathcal{H} t}$ maps continuously $L^{p'} ( \R^d)$ into $L^{p} ( \R^d)$ and
\begin{eqnarray}
\label{eqn:strich1}
\| e^{i \mathcal{H} t} \phi \|_{L^p} \lesssim \frac{1}{|t|^{d(\frac{1}{2}-\frac{1}{p})}} \| \phi \|_{L^{p'}}.
\end{eqnarray}
\end{thm}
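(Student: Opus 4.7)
The plan is to obtain \thmref{thm:strich1} by complex interpolation (Riesz--Thorin) between the two endpoint estimates $p=2$ and $p=\infty$. This is the standard derivation of $L^{p'}\to L^p$ dispersive bounds from a pointwise decay estimate and an $L^2$ bound, and all the ingredients are already in place thanks to the previous theorems.

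First, I would record the $p=\infty$ endpoint. \thmref{thm:tdec} gives
\begin{equation*}
\|e^{it\mathcal{H}} P_c \phi\|_{L^\infty} \lesssim |t|^{-d/2}\|\phi\|_{L^1},
\end{equation*}
where the $L^1$ norm on the right enters as the dual pairing implicit in the statement (the kernel bound $|K_t(x,y)|\lesssim |t|^{-d/2}$ coming from the distorted Fourier representation used to prove \thmref{thm:tdec}). This is the exponent $p=\infty$, $p'=1$ case, with dispersive rate $d(\tfrac12-\tfrac{1}{\infty})=d/2$.

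Second, I would record the $p=2$ endpoint. Under the admissibility hypotheses of \defn{spec:defn1}, the spectrum of $\mathcal{H}$ on the range of $P_c$ is purely absolutely continuous and lies on the imaginary axis, and $\mathcal{H}$ admits the distorted Fourier transform used in \thmref{thm:wlindec}. Writing $e^{it\mathcal{H}}P_c$ as a Fourier multiplier in the distorted basis yields
\begin{equation*}
\|e^{it\mathcal{H}}P_c\phi\|_{L^2} \lesssim \|\phi\|_{L^2},
\end{equation*}
uniformly in $t$. This is the exponent $p=p'=2$ case, with decay rate $d(\tfrac12-\tfrac12)=0$.

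Third, I would interpolate. Viewing $e^{it\mathcal{H}}P_c$ as a linear operator depending on the fixed parameter $t\neq 0$, Riesz--Thorin applied between the two endpoints above gives, for each $\theta\in[0,1]$ with $\tfrac{1}{p}=\tfrac{1-\theta}{2}+\tfrac{\theta}{\infty}=\tfrac{1-\theta}{2}$ (and $\tfrac{1}{p'}=1-\tfrac{1}{p}$),
\begin{equation*}
\|e^{it\mathcal{H}}P_c\phi\|_{L^p} \lesssim |t|^{-\theta d/2}\|\phi\|_{L^{p'}} = |t|^{-d(1/2-1/p)}\|\phi\|_{L^{p'}},
\end{equation*}
which is exactly \eqref{eqn:strich1}.

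The main obstacle is the $p=2$ endpoint: unlike the self-adjoint Schr\"odinger case where unitarity is automatic, $\mathcal{H}$ is matrix-valued and non-self-adjoint, so $L^2$ boundedness of $e^{it\mathcal{H}}P_c$ is a genuine statement that rests on the spectral structure (admissibility) and the distorted Fourier machinery of \cite{Mlin} underlying \thmref{thm:wlindec}. Once this is granted, the dispersive bound and Riesz--Thorin combine cleanly to give the stated family of estimates.
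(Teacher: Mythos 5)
Your interpolation argument is correct and is exactly the route the paper intends: the text states that these estimates follow from Theorem~\ref{thm:tdec} (the $L^1\to L^\infty$ decay) together with the $L^2$ boundedness recorded in Theorem~\ref{thm:disp1}, with details deferred to \cite{Mlin}, and Riesz--Thorin between those two endpoints is precisely that derivation. The only point worth flagging is one your write-up already handles correctly: the estimate can only hold with the projection $P_c$ inserted (the discrete spectral part grows polynomially in $t$ by Theorem~\ref{thm:disp1}(iii)), which the paper's statement of the theorem leaves implicit.
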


\begin{thm}[Schlag]
\label{thm:strich2}
For every $\phi \in L^2$ and every admissible pair $(q,r)$, the function
$t \to e^{i \mathcal{H} t} \phi$ belongs to $L^q (\reals, L^r (\reals^d)) \cap C ( \reals, L^2 (\reals^d))$, and there exists a constant $C$ depending only on $q$ such that
\begin{eqnarray}
\label{eqn:strich2}
\| e^{i \mathcal{H} t} \phi \|_{L^q (\reals, L^r (\reals^d))} \leq C \| \phi \|_{L^2}.
\end{eqnarray}
\end{thm}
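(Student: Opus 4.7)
The plan is to derive this Strichartz estimate from the pointwise dispersive bound (Theorem \ref{thm:strich1}) via the standard $TT^*$ argument of Ginibre--Velo and Keel--Tao, adapted to the non-self-adjoint matrix setting. The two inputs required are: (a) the dispersive decay $\|e^{i\mathcal{H} t}\phi\|_{L^r} \lesssim |t|^{-d(1/2-1/r)}\|\phi\|_{L^{r'}}$, already in hand; and (b) a uniform $L^2 \to L^2$ bound for $e^{i\mathcal{H} t}$ on the continuous spectral subspace. Item (b) is where admissibility is crucial: the absence of embedded eigenvalues in the essential spectrum and of resonances at the edges $\pm\lambda$ ensures that the spectral projection $P_c$ is bounded on $L^2$ and that $e^{i\mathcal{H} t}\rest_{P_c L^2}$ is a uniformly bounded $C_0$-group, with the essential spectrum on the real axis so that no exponential growth enters.

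With (a) and (b) available, I would set $T\phi(t) = e^{i\mathcal{H} t}\phi$ and estimate the composition $TT^*$. Because $\mathcal{H}$ is not self-adjoint, the key structural input is the block symmetry $\sigma_1 \mathcal{H}\sigma_1 = \mathcal{H}^*$, where $\sigma_1$ is the component-flip $(u,v)\mapsto (v,u)$ built into the linearization; this yields $(e^{i\mathcal{H} t})^* = \sigma_1 e^{-i\mathcal{H} t}\sigma_1$ and hence
\begin{equation*}
TT^*F(t) \;=\; \int_{\R} e^{i\mathcal{H} t}\,\sigma_1\, e^{-i\mathcal{H} s}\sigma_1 F(s)\,ds.
\end{equation*}
After absorbing the bounded involution $\sigma_1$, Theorem \ref{thm:strich1} gives the pointwise-in-time bound
\begin{equation*}
\|TT^*F(t)\|_{L^r_x} \;\lesssim\; \int_{\R} |t-s|^{-d(\frac12-\frac1r)}\,\|F(s)\|_{L^{r'}_x}\,ds.
\end{equation*}
The Hardy--Littlewood--Sobolev inequality in the time variable then produces $\|TT^*F\|_{L^q_t L^r_x} \lesssim \|F\|_{L^{q'}_t L^{r'}_x}$ for every admissible pair $(q,r)$, which by $TT^*$ duality is equivalent to the desired $\|T\phi\|_{L^q_t L^r_x} \lesssim \|\phi\|_{L^2}$. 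The continuity $t\mapsto e^{i\mathcal{H} t}\phi \in L^2$ is then immediate from strong continuity of the $C_0$-group on $P_c L^2$.

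The main obstacle is precisely this non-self-adjointness. In the scalar self-adjoint situation $TT^*$ is literally a convolution in $t$ with the propagator, and the Keel--Tao machinery plugs in verbatim; here one must exploit the reflection symmetry of the block structure to reduce to a closely related convolution operator, while checking that the extra $\sigma_1$ factor is compatible with the spectral projection onto $P_c L^2$ (so that one stays within the subspace where the dispersive estimate applies). A secondary subtlety is that the endpoint admissible pair in $d\geq 3$ is not reachable by Hardy--Littlewood--Sobolev alone and requires the full bilinear interpolation step; once the $\sigma_1$-symmetry is tracked, however, the remainder of the argument proceeds by the standard abstract machinery.
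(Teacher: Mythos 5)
The paper itself offers no proof of this theorem: it is quoted (attributed to Schlag) from the companion linear paper \cite{Mlin} and the literature \cite{S1}, \cite{ES1}, so there is no internal argument to compare yours against. Your overall plan --- dispersive decay plus uniform $L^2$ boundedness on the continuous subspace, then $TT^*$/Keel--Tao --- is the right family of ideas, and you are right that the statement must be read on $P_c L^2$ (on the generalized kernel the evolution grows polynomially, cf.\ Theorem \ref{thm:disp1}(iii)).

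There is, however, a genuine gap at the central step. You write $TT^*F(t)=\int e^{i\mathcal{H}t}\,\sigma_1 e^{-i\mathcal{H}s}\sigma_1 F(s)\,ds$ and then claim the dispersive estimate applies ``after absorbing the bounded involution $\sigma_1$.'' It cannot be absorbed: the inner $\sigma_1$ does not commute with the propagator --- by the very symmetry you invoke, $\sigma_1 e^{-i\mathcal{H}s}\sigma_1=e^{-i\mathcal{H}^*s}$ --- so the kernel of $TT^*$ is $e^{i\mathcal{H}t}e^{-i\mathcal{H}^*s}$, which is \emph{not} the propagator at time $t-s$ and is not controlled by Theorem \ref{thm:strich1}. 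The Krein-space variant, replacing $T^*$ by the $\sigma_1$-adjoint $T^\sharp$ so that $TT^\sharp$ becomes a genuine convolution in $t-s$, fails at the other end of the duality: the identity $\|T\phi\|^2=\langle T^*T\phi,\phi\rangle$ relies on positivity of the pairing, which $\langle\sigma_1\cdot,\cdot\rangle$ lacks. The way this is actually handled in the sources the paper cites is to split $\mathcal{H}=\mathcal{H}_0+V$ with $\mathcal{H}_0$ the diagonal (self-adjoint) part and $V$ exponentially localized, apply the classical Strichartz/Keel--Tao theory to $e^{it\mathcal{H}_0}$, and close via Duhamel together with Kato-smoothing/local-decay bounds for the perturbed evolution; alternatively one proves the $|t-s|^{-d/2}$ bound for the composed kernel directly from resolvent expansions. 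Your sketch needs one of these substitutes for the missing convolution structure before the Hardy--Littlewood--Sobolev and endpoint machinery can be invoked.
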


It should be noted that similar estimates to those in Theorems \ref{thm:tdec} and \ref{thm:wlindec} were proven in the works \cite{ES1} and \cite{BW}, where in the first the techniques used were more along the lines of resolvent estimates and in the second the fact that the nonlinearities of interest were of even integer powers was crucial to the argument.  However, from the scattering theory point of view taken in \cite{Mlin}, we are able to define the orthogonality condition \eqref{eqn:mom1} in order to generalize the results and more easily prove the weighted estimates.

\section{Main Results}
\label{int:main}

To begin, we define the function space
\begin{eqnarray*}
\mathcal{P}^A_1 = \{ \phi \in L^2 | \| \phi \|_{H^A} < \infty, \ \| |x|^A \phi \|_{L^2} < \infty,  \ \int x^\alpha \phi(x) dx = 0 \ \text{for $|\alpha| \leq 2 A$} \},
\end{eqnarray*}
with norm given by
\begin{eqnarray*}
\| \phi \|_{\mathcal{P}^A_1} = \left( \| \phi \|_{H^A}^2 + \| |x|^A \phi \|_{L^2}^2 \right)^{\frac{1}{2}}.
\end{eqnarray*} 

We similarly define the function space
\begin{eqnarray*}
\mathcal{P}^A_2 = \{ \phi \in P_c \mathcal{H} | \| \phi \|_{H^A} < \infty, \ \| |x|^A \phi \|_{L^2} < \infty,  \ \text{condition \eqref{eqn:mom1} is satisfied for $j \leq A$} \},
\end{eqnarray*}
with norm given by
\begin{eqnarray*}
\| \phi \|_{\mathcal{P}^A_2} = \left( \| \phi \|_{H^A}^2 + \| |x|^A \phi \|_{L^2}^2 \right)^{\frac{1}{2}}.
\end{eqnarray*}

In this result, we seek to prove that minimal mass solitons for nonlinear Schr\"odinger equations in three dimensions have stable perturbations for long times.  These minimal solitons are unstable as discussed below. The main goal of this thesis is to prove the following three theorems:

\begin{thm}
\label{thm:mainLS}
Take the equation in $\RR \times \RR^3$
\begin{eqnarray}
\label{eqn:NLS}
\left\{\begin{array}{c}
i u_t + \Delta u + \beta (|u|^2) u = 0 \\
u(0,x) = u_0 (x),
\end{array} \right.
\end{eqnarray}
where $\beta$ is an admissible saturated nonlinearity of type $1$.  For any $\phi \in \mathcal{P}^A_1$, Equation \eqref{eqn:NLS} has a solution $u$ for $ t \in [\frac{1}{\delta}, \infty)$ of the form 
\begin{eqnarray*}
u(x,t) = R_{min} + v(t) = R_{min} + e^{i \Delta t} \phi + w(x,t),
\end{eqnarray*}
where $R_{min}$ is the minimal mass soliton in Definition \ref{spec:defn1} and $\| w(\cdot,t) \|_{H^2}  \to 0$ as $t \to \infty$.  For Equation \eqref{eqn:NLS} in $\reals^3$ of type $1$, we have $A > \frac{13}{2}$.  
\end{thm}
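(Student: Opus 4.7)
The plan is to insert the ansatz $u(x,t) = e^{i\lambda_{\min}t}(R_{\min} + \tilde v(x,t))$ into \eqref{eqn:NLS} and, using that $R_{\min}$ satisfies \eqref{eqn:sol} at $\lambda = \lambda_{\min}$, derive a perturbation equation $\partial_t \tilde v = \mathcal H \tilde v + \mathcal N(\tilde v)$ of the type studied in Section 3. Here $\mathcal H$ is the linearized Hamiltonian and $\mathcal N(\tilde v)$ is the Taylor remainder gathering all at-least-quadratic terms from expanding $\beta(|R_{\min}+\tilde v|^2)(R_{\min}+\tilde v)$ around $R_{\min}$; near the soliton it is a localized polynomial in $\tilde v$ (coefficients decaying exponentially away from the support of $R_{\min}$), and far from the soliton it reduces to a self-interaction $\sim \beta(|\tilde v|^2)\tilde v$, which behaves like $|\tilde v|^{p+1}$ by Definition \ref{def:non1}. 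I would then decompose $\tilde v = e^{it\mathcal H}P_c\phi + w(x,t)$, identifying the ``free'' dispersive piece $e^{it\mathcal H}P_c\phi$ with the $e^{i\Delta t}\phi$ in the theorem up to an error to be absorbed into $w$.

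The residual $w$ solves $\partial_t w = \mathcal H w + \mathcal N(e^{it\mathcal H}P_c\phi + w)$, and since I am constructing a solution on $[1/\delta,\infty)$ that disperses as $t\to\infty$ I would recast this via Duhamel, integrating backward from infinity:
\begin{equation*}
w(t) = -\int_t^{\infty} e^{(t-s)\mathcal H}\, P_c\, \mathcal N\bigl(e^{is\mathcal H}P_c\phi + w(s)\bigr)\, ds.
\end{equation*}
I would then set up a contraction on a Banach space of the form
\begin{equation*}
X = \Bigl\{ w \in C([1/\delta,\infty); H^2(\R^3)) : \sup_{t}\, t^{\alpha}\|w(t)\|_{H^2} + \sup_{t}\, t^{\alpha'}\||x|^B w(t)\|_{L^2} < \infty \Bigr\},
\end{equation*}
with exponents $\alpha, \alpha', B$ chosen to close the iteration. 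The weighted slot is included so that a second Duhamel pass can re-enter Theorem \ref{thm:wlindec} through a quantity satisfying enough decay and localization.

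To drive the estimates I would combine the pointwise bound $\|e^{-c|x|}e^{it\mathcal H}P_c\phi\|_{L^\infty} \lesssim t^{-3/2-A}\|\phi\|_{\mathcal P^A_1}$ from Theorem \ref{thm:wlindec} (the moment conditions built into $\mathcal P^A_1$ are exactly those required by \eqref{eqn:mom1}), the unweighted $t^{-3/2}$ decay of Theorem \ref{thm:tdec}, the weighted and Sobolev bounds of Theorem \ref{thm:disp1}, and the Strichartz estimates of Theorem \ref{thm:strich2} together with the Sobolev embedding $H^2(\R^3)\hookrightarrow L^\infty_{\rm loc}$. The leading localized quadratic contribution to $\mathcal N$ is of the form $e^{-c|x|}|\tilde v|^2$, handled directly by Theorem \ref{thm:wlindec} for the linear piece and by the weighted slot of $X$ for $w$; the self-interaction $\beta(|\tilde v|^2)\tilde v \sim |\tilde v|^{p+1}$ with $p > 10/3$ is genuinely higher order and closes through Strichartz.

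The principal obstacle is that $\mathcal N$ destroys the moment cancellations \eqref{eqn:mom1} that power Theorem \ref{thm:wlindec}: while $e^{it\mathcal H}P_c\phi$ enjoys the accelerated weighted decay $t^{-3/2-A}$, its quadratic and higher powers do not satisfy \eqref{eqn:mom1}, so inside the Duhamel integral one can only use the base $t^{-3/2}$ estimate. Requiring that this bare dispersion still beat the $(1+|t|^B)$ loss of Theorem \ref{thm:disp1}(iv), that the resulting $w$ carry enough weighted regularity to be fed back into a second Duhamel iteration, and that the quadratic localized source in $\R^3$ yield a $w$ decaying strictly faster than $t^{-3}$ in $H^2$ so that the Duhamel integral converges absolutely, forces the threshold $A > 13/2$ quoted in Theorem \ref{thm:mainLS}. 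Choosing $T_0 = 1/\delta$ sufficiently large and $\|\phi\|_{\mathcal P^A_1}$ sufficiently small then makes the map above a strict contraction on a small ball of $X$, producing the asymptotic solution with $\|w(\cdot,t)\|_{H^2} \to 0$ as $t \to \infty$.
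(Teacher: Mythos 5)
Your outline is essentially the proof of Theorem \ref{thm:mainHLS}, not of Theorem \ref{thm:mainLS}, and the difference between the two is exactly where the threshold $A>\frac{13}{2}$ comes from. In Theorem \ref{thm:mainLS} the free piece is $e^{i\Delta t}\phi$: the paper sets $v_0 = e^{-it}z_\phi$ with $iz_t+\Delta z=0$, so when the equation for $w$ is derived, the terms of $F(|R+v_0|^2)(R+v_0)-F(R^2)R$ that are \emph{linear} in $v_0$ are not cancelled by the free flow and remain in the source: $f_0 = O(R^p|v_0|)+O(|v_0|^{p+1})$. The dominant forcing is therefore the localized \emph{linear} term $O(R^p v_0)$, whose local decay $|e^{-c|x|}v_0|\lesssim t^{-\frac{d}{2}-M}$ is obtained from the ordinary vanishing-moment conditions $\int x^\alpha\phi\,dx=0$ built into $\mathcal{P}^A_1$ (by Taylor expanding the explicit free Schr\"odinger kernel), not from the distorted-Fourier conditions \eqref{eqn:mom1} of Theorem \ref{thm:wlindec}, which define $\mathcal{P}^A_2$ and pertain to $e^{it\mathcal{H}}$. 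Feeding this linear source through the discrete-spectrum piece of the Duhamel integral, which costs a factor $(1+(\tau-t)^3)$, forces $\frac{d}{2}+M-4\ge N$ with $N>4$, i.e. $M>8-\frac{d}{2}=\frac{13}{2}$ in $\R^3$. Your claim that the leading localized contribution to the nonlinearity is quadratic, of the form $e^{-c|x|}|\tilde v|^2$, is what happens in the $\mathcal{H}$-flow decomposition, and it yields only the weaker threshold $A>\frac52$ of Theorem \ref{thm:mainHLS}; with that accounting you cannot recover $A>\frac{13}{2}$ except by assertion.

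The second gap is the sentence identifying $e^{it\mathcal{H}}P_c\phi$ with $e^{i\Delta t}\phi$ ``up to an error to be absorbed into $w$.'' That error solves an inhomogeneous equation whose source is precisely the localized linear term $\sim R^{p}v_0$ (plus the discrete-spectrum component of the data), so absorbing it into $w$ is not a bookkeeping step: it is the entire content of Theorem \ref{thm:mainLS} as opposed to Theorem \ref{thm:mainHLS}, and it must be paid for with the extra moment conditions described above. To prove the theorem as stated, keep the free piece as $e^{i\Delta t}\phi$ throughout, accept that $f_0$ is linear in $v_0$, and extract the decay of that linear source from the polynomial moments of $\phi$ against the free kernel; the remainder of your scheme (backward Duhamel with $e^{i(\tau-t)\mathcal{H}}$, splitting into $P_d$ and $P_c$, the weighted slot of $X$, and the dispersive estimates for the $|v_0|^{p+1}$ tail with $p>\frac{10}{3}$) then matches the paper's argument.
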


\begin{thm}
\label{thm:mainHLS}
Take Equation \eqref{eqn:NLS}, where $\beta$ is an admissible saturated nonlinearity of type $1$.  For any $\phi \in \mathcal{P}^A_1$, Equation \eqref{eqn:NLS} has a solution $u$ for $t \in [\frac{1}{\delta}, \infty)$ of the form 
\begin{eqnarray*}
u(x,t) = R_{min} + v(t) = R_{min} + e^{i \mathcal{H} t} \phi + w(x,t),
\end{eqnarray*}
where $R_{min}$ is the minimal mass soliton in Definition \ref{spec:defn1} and $\| w(\cdot,t) \|_{L^2}  \to 0$ as $t \to \infty$.  In this theorem, for Equation \eqref{eqn:NLS} in $\reals^3$ of type $1$, we have $A > \frac{5}{2}$.  
\end{thm}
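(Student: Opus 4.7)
The plan is to solve for $w$ via a contraction mapping based on the Duhamel representation of the linearized equation around $R_{\min}$, exploiting the dispersive estimates in Section 4. Writing $\psi(t,x) = e^{i\lambda_{\min} t}(R_{\min} + v(t,x))$ with $v = e^{i\mathcal{H}t}\phi + w$ and substituting into \eqref{eqn:NLS}, the linear and soliton pieces cancel, leaving for the vector form of $w$
\begin{eqnarray*}
i\partial_t w - \mathcal{H} w = N(v),
\end{eqnarray*}
where $N(v)$ is the nonlinear remainder, at least quadratic in $v$ with coefficients built from $R_{\min}$ and the derivatives $\beta'(R_{\min}^2), \beta''(R_{\min}^2)$. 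To enforce $\|w(t)\|_{L^2}\to 0$ as $t\to\infty$, invert via Duhamel in reverse time:
\begin{eqnarray*}
w(t) = i \int_t^\infty e^{-i\mathcal{H}(s-t)} N(v(s))\, ds.
\end{eqnarray*}
Because the ansatz already incorporates the full linearized propagator $e^{i\mathcal{H}t}$, the source $N$ contains no linear-in-$v$ piece to absorb, which is why $A>5/2$ and $L^2$ convergence suffice here, whereas Theorem \ref{thm:mainLS} required $A>13/2$ and $H^2$ convergence to handle the extra resonant term $(\beta(R_{\min}^2) + 2\beta'(R_{\min}^2)R_{\min}^2) e^{i\Delta t}\phi$.

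The contraction space I would use is
\begin{eqnarray*}
\|w\|_X = \sup_{t\geq 1/\delta}\bigl(t^{3/2}\|w(t)\|_{L^\infty} + \|w(t)\|_{L^2}\bigr),
\end{eqnarray*}
so that $v = e^{i\mathcal{H}t}\phi + w$ inherits the $t^{-3/2}$ decay of $e^{i\mathcal{H}t}P_c \phi$ supplied by Theorems \ref{thm:tdec} and \ref{thm:wlindec}, using the moment vanishing in the definition of $\mathcal{P}^A_1$ to justify the fast-decay hypothesis \eqref{eqn:mom1} after spectral projection. The leading quadratic piece of $N$ has the schematic form $R_{\min} v^2$, so using the exponential decay of $R_{\min}$ and H\"older
\begin{eqnarray*}
\|N(v(s))\|_{L^2}\lesssim s^{-3/2}\|v\|_X^2, \qquad \|N(v(s))\|_{L^1}\lesssim s^{-3}\|v\|_X^2.
\end{eqnarray*}
The $L^2$ bound $\|w(t)\|_{L^2}\lesssim \int_t^\infty s^{-3/2}\,ds\lesssim t^{-1/2}$ follows from $L^2$-boundedness of $e^{i\mathcal{H}t}P_c$ in Theorem \ref{thm:disp1}(ii). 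For the $L^\infty$ decay, split the $s$-integral at $s=t+1$: the far piece $s\geq t+1$ is handled by the $t^{-3/2}$ dispersive estimate of Theorem \ref{thm:tdec} applied to the $L^1$ bound on $N$, while the near piece $s\in[t,t+1]$ is handled by the Strichartz estimate of Theorem \ref{thm:strich2} paired with Sobolev embedding, avoiding the nonintegrable $(s-t)^{-3/2}$ singularity. Contraction follows from the bilinear structure of $N(v_1) - N(v_2)$.

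The main obstacle is the projection $P_d w$ onto the (generalized) zero eigenspace of $\mathcal{H}$, because Theorem \ref{thm:disp1}(iii)--(v) only offers polynomially growing bounds for $e^{i\mathcal{H}t}P_d$ coming from the Jordan block at the origin induced by the symmetries of NLS and the minimality of $R_{\min}$. To control this, I would follow the strategy of \cite{BW} and \cite{KS1}: the rigid vanishing-moment conditions in the definition of $\mathcal{P}^A_1$ are used to guarantee that $P_d N(v(s))$, integrated from $t$ to $\infty$, remains bounded and decaying despite the generalized-eigenfunction growth, so that the contraction closes on the full Duhamel operator. Verifying that these orthogonality conditions propagate through the nonlinear iteration, and in particular that the secular modes generated by $N(v)$ are suppressed by the decay of the source rather than amplified by $P_d$, will be the heart of the argument.
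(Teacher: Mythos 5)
Your architecture matches the paper's: backward Duhamel from $t=\infty$, a contraction for $w$, and the key structural observation that propagating the free part with $e^{i\mathcal{H}t}$ rather than $e^{i\Delta t}$ removes the linear-in-$v_0$ terms from the source $f_0$, leaving $|f_0|\lesssim O(R^{p}|v_0|^2)+O(|v_0|^{p+1})$. That is exactly the paper's explanation for the weaker hypotheses relative to Theorem \ref{thm:mainLS}. However, there is a genuine gap at the step you yourself flag as ``the heart of the argument'': the discrete-spectrum piece of the Duhamel integral. The only available bound is $\|e^{it\mathcal{H}}P_d\psi\|_{H^s}\leq C(1+|t|^3)\int e^{-c|x|}|\psi|\,dx$, so you must estimate $\int_t^\infty\bigl(1+(\tau-t)^3\bigr)\,\bigl\|e^{-c|x|}N(v(\tau))\bigr\|_{L^1}\,d\tau$. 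With the decay your contraction norm supplies, namely $\|v(\tau)\|_{L^\infty}\lesssim \tau^{-3/2}$, the quadratic source gives only $e^{-c|x|}|N|\lesssim \tau^{-3}$, and $\int_t^\infty(\tau-t)^3\tau^{-3}\,d\tau$ diverges. Your scheme therefore does not close, and no amount of ``propagating orthogonality through the iteration'' fixes this: the paper's mechanism is not orthogonality of $P_dN$ but rather \emph{enhanced local decay of the source}. Specifically, Theorem \ref{thm:wlindec} gives $\|e^{-c|x|}e^{it\mathcal{H}}P_c\phi\|_{L^\infty}\lesssim t^{-\frac{d}{2}-M}$ when $\phi$ satisfies the $M$ distorted-Fourier moment conditions \eqref{eqn:mom1}; hence $e^{-c|x|}|v_0|^2\lesssim \tau^{-(d+2M)}$, the $P_d$ integral is $\lesssim t^{-(d+2M-4)}$, and the contraction requirement $d+2M-4>4$ yields precisely $M>\tfrac52$ in $\reals^3$. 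This weighted estimate is the quantitative content behind the threshold $A>\tfrac52$, and it is absent from your write-up; without it the theorem's numerology cannot be derived.

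A smaller but related point: the moment conditions that produce this local decay are the conditions \eqref{eqn:mom1} on the distorted Fourier transform $\vec{\Psi}$ (the space $\mathcal{P}^A_2$ of Section \ref{thm1:Hopt}), not the flat polynomial moments $\int x^\alpha\phi\,dx=0$ defining $\mathcal{P}^A_1$. You assert these are equivalent ``after spectral projection,'' but that identification is nontrivial and is not justified in your argument (the paper itself is loose on this point, but your proof relies on it essentially). Finally, your claimed bound $\|N(v(s))\|_{L^2}\lesssim s^{-3/2}\|v\|_X^2$ is weaker than what the structure gives ($\|R v^2\|_{L^2}\lesssim\|v\|_{L^\infty}^2\lesssim s^{-3}$); this is harmless for the $P_c$ part but signals that the norm bookkeeping needs to be redone once the weighted local decay is incorporated.
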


\begin{thm}
\label{thm:longtime}
Given Equation \eqref{eqn:NLS}, where $\beta$ is an admissible saturated nonlinearity of type $2$, for any $\phi = P_c \phi \in W^{2,1} \cap H^2$ with $\| \phi \|_{W^{2,1} \cap H^2} < \delta < 1$, Equation \eqref{eqn:NLS} has a solution for $t \in [ 0,\left( \frac{1}{2 \delta} \right)^{\frac{1}{4}} )$ of the form 
\begin{eqnarray*}
u(x,t) = R_{min} + v(t) = R_{min} + e^{i \mathcal{H} t} \phi + w(x,t),
\end{eqnarray*}
where $R_{min}$ is the minimal mass soliton in Definition \ref{spec:defn1} and
\begin{eqnarray*}
u(x,0) = R_{min} + \phi.
\end{eqnarray*} 
\end{thm}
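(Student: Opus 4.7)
The plan is to run a contraction argument for the remainder $w$ on a finite time interval. Working in the gauge $u = e^{i\lambda_{\min}t}(R_{\min}+v)$, with $v = e^{i\mathcal{H} t}\phi + w$ so that $w(0)=0$ and $u(0,x)=R_{\min}+\phi$ automatically, substituting into \eqref{eqn:NLS} and using the soliton equation to cancel the stationary part reduces the problem to the Duhamel formula
\begin{equation*}
w(t) = \int_0^t e^{i\mathcal{H}(t-s)} N(v(s))\,ds, \qquad N(v) = \beta(|R_{\min}+v|^2)(R_{\min}+v) - \beta(R_{\min}^2)R_{\min} - Lv,
\end{equation*}
where $L$ is the linearization around $R_{\min}$ that is absorbed into $\mathcal{H}$. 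I would look for a fixed point in the ball $B_T = \{w \in C([0,T];H^2) : \|w\|_{L^\infty_t H^2_x} \leq C_0\delta\}$.

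For a type $2$ nonlinearity, $\beta(s) \sim s$ near $s=0$, so the Taylor expansion of $N(v)$ around $v=0$ yields two kinds of contributions: a \emph{spatially localized quadratic} part of the form $R_{\min}\cdot(\text{quadratic in }v,\bar{v})$, which carries the exponential decay of the soliton, and higher-order pieces beginning with the genuinely cubic $|v|^2v$. Using $\|e^{i\mathcal{H} t}\phi\|_{H^2} \lesssim \delta$ from Theorem \ref{thm:disp1}(i) together with $\|w\|_{H^2}\leq C_0\delta$ inside $B_T$, Sobolev embedding in three dimensions yields $\|v\|_{L^\infty}\lesssim \delta$, whence
\begin{equation*}
\|N(v)\|_{H^2} \lesssim \delta^2, \qquad \int e^{-c|x|}|N(v(s))|\,dx \lesssim \delta^2,
\end{equation*}
the first bound coming from the algebra structure of $H^2(\RR^3)$ and the second from estimating the localized quadratic piece pointwise and bounding the cubic $|v|^2v$ contribution by $\|v\|_{L^\infty}\|v\|_{L^2}^2$.

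To bound the Duhamel integral, split $N(v) = P_c N(v) + P_d N(v)$. The continuous-spectrum contribution is harmless: Theorem \ref{thm:disp1}(ii) gives
\begin{equation*}
\Big\|\int_0^t e^{i\mathcal{H}(t-s)}P_c N(v(s))\,ds\Big\|_{H^2} \lesssim \int_0^t \|N(v(s))\|_{H^2}\,ds \lesssim T\delta^2,
\end{equation*}
while the discrete-spectrum piece, by Theorem \ref{thm:disp1}(iii), satisfies only
\begin{equation*}
\Big\|\int_0^t e^{i\mathcal{H}(t-s)}P_d N(v(s))\,ds\Big\|_{H^2} \lesssim \int_0^t (1+(t-s)^3)\,\delta^2\,ds \lesssim (1+T^4)\delta^2.
\end{equation*}
Combining with the linear term yields $\|\Phi(w)\|_{L^\infty_T H^2} \leq C_1\delta + C_2(1+T^4)\delta^2$, and an analogous Lipschitz estimate in $w$ (using multilinearity of $N$ modulo cubic remainders) shows $\Phi$ is a contraction once $T^4\delta \leq 1/(2C_2)$, which is exactly the horizon $T < (1/(2\delta))^{1/4}$ stated in the theorem.

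The main obstacle, and the reason the time of validity is only $\sim\delta^{-1/4}$ rather than infinite, is the $(1+t^3)$ factor in Theorem \ref{thm:disp1}(iii). This polynomial growth is intrinsic at the minimum mass: because $Q'_\lambda(\lambda_{\min})=0$, the Hamiltonian $\mathcal{H}$ carries an extra generalized eigenvector at zero, producing a Jordan block that drives cubic-in-$t$ growth of the linear flow on the discrete subspace. Since the nonlinear source is only of size $\delta^2$ (dominated by the localized quadratic $R_{\min} v^2$ term, which cannot be cancelled without additional moment conditions on $\phi$ of the kind imposed in Theorems \ref{thm:mainLS} and \ref{thm:mainHLS}), this growth can be absorbed only up to the quartic horizon $T^4\delta \lesssim 1$, which is precisely the origin of the bound $T<(1/(2\delta))^{1/4}$.
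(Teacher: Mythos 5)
Your proposal is correct and follows essentially the same route as the paper: a forward-in-time Duhamel formulation for $w$, a splitting of the source into $P_d$ and $P_c$ components, the $(1+t^3)$-weighted local decay estimate of Theorem \ref{thm:disp1}(iii) on the discrete part against plain $H^2$ boundedness on the continuous part, and a contraction/continuity argument that closes precisely when $T^4\delta \lesssim 1$. Your closing explanation of the $t^3$ growth via the enlarged generalized kernel at the minimal mass is consistent with the paper's remark that $N_g(\mathcal{H}) = 2d+4$ there.
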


\begin{rem}
In Theorems \ref{thm:mainLS} and \ref{thm:mainHLS}, the stable perturbations can be shown to live on a finite codimension manifold for $p$ large enough compared to $d$.  This will be explored further below.
\end{rem}

The class of functions $\mathcal{P}^A_i$ for $i = 1,2$ will be developed throughout the course of this work.  They will result from projecting onto a distorted Fourier basis for the linearized problem.  For a further discussion these topics and the notion of distorted Fourier basis, see \cite{Mlin} and the refences contained within.

\section{Preliminaries}
\label{thm1:prelim}

We wish to construct a contraction argument similar to that presented in \cite{BW} in the case where we have a more general nonlinearity.  In particular, we have the equation
\begin{eqnarray*}
\left\{ \begin{array}{c}
i u_t + \Delta u+ F(|u|^2) u = 0, \\
u(0,x) = u_0 (x),
\end{array} \right.
\end{eqnarray*}
where $F$ is chosen to be of type $1$ or type $2$.

Many estimates that hold for the $L^2$ critical equation hold at that soliton because they share the property that $\partial_\lambda Q(u_\lambda) = 0$ where $\lambda$ is the soliton parameter and $Q$ is the $L^2$ mass.  As this is a minimal mass soliton, there are many possible perturbations.  One could perturb onto the manifold of stable solitons, onto the manifold of unstable solitons, or in fact, reduce the $L^2$ energy so that solitons no longer formed.  Unfortunately, due to a lack of scaling and general difficulties, very little is known about stable perturbations to such a soliton.  Also, it is a major question whether or not we have dispersion and scattering for initial data with $L^2$ mass below the minimal soliton mass.  We hope to address this in future work, but for now we wish to prove the existence of stable solutions to the minimal mass soliton.  We may assume that the minimal mass soliton occurs at $\lambda_0 = 1$.  In other words, if $R$ is the desired soliton, we seek a solution of the form
\begin{eqnarray*}
u = R e^{it} + z_{\phi} e^{it} + w e^{it},
\end{eqnarray*}
where $w \in C( [\frac{1}{\delta}, \infty]; X)$ and $\| w \|_X \leq \frac{1}{t^N}$ for some normed space $X$ and some large $N$ to be determined.  The goal is to solve this problem for $z$ solving both
\begin{eqnarray*}
i z_t + \Delta z & = &0, \\
z(0,x) & = & \phi(x),
\end{eqnarray*}
as well as
\begin{eqnarray*}
i z_t + \mathcal{H} z & = & 0, \\
z(0,x) & = & \phi(x),
\end{eqnarray*}
for $\mathcal{H}$ the matrix Hamiltonian that results from linearizing about the minimal mass soliton.  

To begin, we run through the contraction argument assuming that we are using the linear Schrodinger operator, $e^{i \Delta t}$, and the space $X = X_A$ defined by
\begin{eqnarray*}
X_A = \{ \phi | \| \phi \|_{H^A} + \| (1 + |x|)^A \phi \|_{L^2} < \infty \}.
\end{eqnarray*}

Let $v_0 = z_\phi e^{-it}$ and let $u(x,t) = e^{it} (R + v)$ for $v = w+v_0$.  Then, $v$ must satisfy
\begin{eqnarray*}
iv_t + \Delta v - v + [F(|R+v|^2) (R+v) - F(R^2) R] = 0,
\end{eqnarray*}
or
\begin{eqnarray*}
i v_t + \Delta v - v + (F(R^2) + F'(R^2) R^2) v + (F'(R^2) R^2) \bar{v}  + O(|v|^2) = 0.
\end{eqnarray*}

Since $i (v_0)_t + \Delta (v_0) - v_0 = 0$, we have
\begin{eqnarray*}
i w_t + \Delta w - w + [F(|R+v_0+w|^2) (R+v_0+w) - F(R^2) R] = 0.
\end{eqnarray*}

Let 
\begin{eqnarray*}
f_0 & = & F(|R + v_0|^2)(R+v_0) - F(R^2) R, \\
a & = & [F(|R+v_0|^2) + F'(|R+v_0|^2)|R+v_0|^2] - [F(R^2) + F'(R^2)R^2],  \\
b & = & F'(|R+v_0|^2)(R+v_0)^2 - F'(R^2) R^2, \\
G(w) & = & F(|R+v_0+w|^2) (R+v_0+w) - F(|R + v_0|^2)(R+v_0) \\
& - & [F(|R+v_0|^2) + F'(|R+v_0|^2)|R+v_0|^2] w - F'(|R+v_0|^2)(R+v_0)^2 \bar{w}.
\end{eqnarray*}
Then, we have
\begin{eqnarray*}
i w_t + \Delta w - w + (F(R^2) + F'(R^2)R^2) w + F'(R^2)R^2 \bar{w} + f_0 + a w + b \bar{w} + G(w) = 0.
\end{eqnarray*}

In other words, we have
\begin{eqnarray*}
i w_t - H w + a w + b \bar{w} + f_0 + G(w) = 0,
\end{eqnarray*}
where $G$ is at least quadratic in $w$ and $f_0$ is linear in $v_0$.  

To see this, note that for nonlinearities of type $1$, we have
\begin{eqnarray*}
F(x) & = & \frac{x^{\frac{p}{2}}}{1+x^{\frac{p-q}{2}}} \\
F'(x) & = & \frac{x^{\frac{p}{2}-1} (\frac{p}{2}+ \frac{q}{2} x^{\frac{p-q}{2}})}{(1+x^{\frac{p-q}{2}})^2} \\
F''(x) & = & \frac{x^{\frac{p}{2}-2} (\frac{p}{2}(\frac{p}{2}-1)+(pq-\frac{q^2}{4}-\frac{q}{2}-\frac{p^2}{2}) x^{\frac{p-q}{2}}+(\frac{q^2}{4}-\frac{q}{2}) x^{p-q})}{(1+x^{\frac{p-q}{2}})^3},
\end{eqnarray*}
and for type $2$,
\begin{eqnarray*}
F(x) & = & \frac{x}{(1+x)^{\frac{2-q}{2}}} \\
F'(x) & = & \frac{ 1 + \frac{q}{2} x}{(1+x)^{2 - \frac{q}{2}}} \\
F''(x) & = & \frac{(q-2)+\left( \frac{q^2}{4}-\frac{q}{2} \right)x}{(1+ x)^{3 - \frac{q}{2}}}.
\end{eqnarray*}

Note that in both cases, $F \in C^1$ and in the second case, $F \in C^\infty$.  However, we can define $G(z,\bar{z}) = F(|R+z|^2)(R+z)$.  This is $C^2$ at $z = 0$ in both cases.  To see this, note
\begin{eqnarray*}
\partial_z G & = & F'(|R+z|^2) (R+z)(R+\bar{z}) + F(|R+z|^2) \\
\partial_{\bar{z}} G & = & F'(|R+z|^2) (R+z)^2 \\
\partial_{zz} G & = & 2 F'(|R+z|^2)(R+\bar{z}) + F''(|R+z|^2) (R+z)(R+\bar{z})^2 \\
\partial_{\bar{z} \bar{z}} G & = & F''(|R+z|^2) (R+z)^3 \\
\partial_{z \bar{z}} G & = & 2 F'(|R+z|^2)(R+z) + F''(|R+z|^2) (R+z)^2(R+\bar{z}),
\end{eqnarray*}
hence at $z = 0$, the terms resulting in exponential growth from $F''$ are controlled.  In the resulting Taylor expansion, we see 
\begin{eqnarray*}
G(z,z') = F(R^2)R + F'(R^2)R^2 \bar{z} + (F'(R^2)R^2 + F(R^2)) z + O(|R+z|^{p-1} |z|^2).
\end{eqnarray*}

Let us make the assumption that we are working with type $1$ nonlinearities with $\frac{10}{3}<p<\infty$.  The author believes that similar results should hold even if $p$ is not restricted to allow more regularity of the nonlinearity, however for the expansions in the sequel to be accurate, we must restrict the nonlinearities to have sufficient regularity, as well as to provide sufficient decay in $t$.

Let us explore the behaviors of the the above functions.  For simplicity, let $v_0 = v_1+i v_2$.  To begin,
\begin{eqnarray*}
|f_0| & = & |F(R^2+2Rv_1+v_1^2+v_2^2)(R+v_0)-F(R^2)R| \\
& \lesssim & |[F(R^2)R + O(|R+v_0|^p|v_0|) - F(R^2)R| \\
& \lesssim & O(R^p |v_0|) + O(|v_0|^{p+1})| .
\end{eqnarray*}

A similar calculation gives that 
\begin{eqnarray*}
|a| \lesssim O(R^{p-1} |v_0|) + O(|v_0|^{p}),
\end{eqnarray*}
and similarly for $b$, we have
\begin{eqnarray*}
|b| \lesssim O(R^{p-1} |v_0|) + O(|v_0|^{p}).
\end{eqnarray*}

Finally, we have
\begin{eqnarray*}
|G| \lesssim O( |w|^2 ).
\end{eqnarray*} 

Hence, we solve the integral equation for $w$
\begin{eqnarray*}
w(t) = -i \int_t^\infty e^{i(\tau-t)H} [f_0 + aw + b \bar{w} + G(w)] d\tau.
\end{eqnarray*}

We would like to see that $\| w(t) \|_{X_A} \leq \frac{1}{t^{M}}$ for some $M$ to be determined in order to show that we have a stable manifold of perturbations on the function space $X^A$.  However, we are actually only able to prove $\| w(t) \|_{X^0} \leq \frac{1}{t^M}$, where $X^A \subset X^0$.  The resulting effects of this will appear later in Section \ref{thm1:man}.

\section{Contraction Argument}
\label{thm1:cont}

Making the assumption that 
\begin{eqnarray*}
\int x^\beta \phi(x) dx = 0,
\end{eqnarray*}
for the multi-index $|\beta| = 0,1,2,...,2M$ for $M$ to be determined, we have by Taylor expanding the exponential in the fundamental solution that
\begin{eqnarray*}
z_\phi(x,t) = O \left( \frac{1}{t^{M+\frac{d}{2}}} \right)
\end{eqnarray*}
where $|x| \lesssim 1$.

We have that 
\begin{eqnarray*}
\| D^\alpha v_0 \|_{L^\infty} & \leq & \frac{C}{t^{\frac{d}{2} p}}, \\
| e^{-c|x|} D^\alpha v_0 (x,t)| & < & \frac{C}{t^N},
\end{eqnarray*}
for $t$ and $N$ large and the range of $\alpha$'s to be determined.  In general, we explore low regularity perturbations but are in need of $L^\infty$ bounds, so $\alpha$ will be small, but positive.

Then, we have for $s$ in some range to be determined
\begin{eqnarray*}
\| (R v_0) (t) \|_{H^s} \leq \frac{C}{t^{M+\frac{d}{2}}},
\end{eqnarray*}
and hence, we have to check that for our space $X^A$, 
\begin{eqnarray*}
\| f_0 \|_{X^A} & \leq & \frac{C}{t^{\frac{d}{2} (p+1)}}.
\end{eqnarray*}
where $p$ is determined by the supercritical power in the nonlinearity and we have assumed the moments condition above for all $|\beta| \leq 2M$.  In particular, note that $\gamma > \frac{4}{d}$, hence these terms have at least quadratic decay in $t$ of the $L^\infty$ norm.

We also have
\begin{eqnarray*}
|a| = O(R^{p-1} |v_0| + O (|v_0|^p)),
\end{eqnarray*}
hence
\begin{eqnarray*}
| D^\alpha a(x,t) | & \leq & \frac{C}{t^{\frac{d}{2} p}}, \\
|e^{-c|x|} D^\alpha a(x,t)| & \leq & \frac{C}{t^{M+\frac{d}{2}}}.
\end{eqnarray*}

Similarly,
\begin{eqnarray*}
| D^\alpha b(x,t) | & \leq & \frac{C}{t^{\frac{d}{2} p}}, \\
|e^{-c|x|} D^\alpha b(x,t) | & \leq & \frac{C}{t^{M+\frac{d}{2}}}.
\end{eqnarray*}

Now, we look at the integral formulation of the equation for $w$
\begin{eqnarray}
\label{eqn:contw1}
w(t) & = & -i \int_t^\infty e^{i(\tau-t) \mathcal{H}} P_d [f_0 + aw + b \bar{w} + G(w)](\tau) d\tau \\
\label{eqn:contw2}
& + & -i \int_t^\infty e^{i(\tau-t) \mathcal{H}} P_c [f_0 + aw + b \bar{w} + G(w)](\tau) d\tau,
\end{eqnarray}
where $P_S$ projects onto the singular part of the spectrum and $P_M$ projects onto the discrete part of the spectrum.  

Since we are interested in minimal regularity perturbations, we first want to see that 
\begin{eqnarray*}
\| w(t) \|_{H^2} \leq \frac{1}{t^{N_1}},
\end{eqnarray*}
for $t \geq \frac{1}{\delta}$ and $N_1$ to be determined.


To do this, we will discuss Equations \eqref{eqn:contw1} and \eqref{eqn:contw2} separately.

From the following Corollary in \cite{Mlin},
\begin{eqnarray*}
\| P_{c} e^{i t \mathcal{H}} f \|_{L^2} \lesssim \| f \|_{L^2},
\end{eqnarray*}
we have
\begin{eqnarray*}
\| \text{\eqref{eqn:contw1}} \|_{H^2} & \leq & \int_t^\infty [1 + (\tau-t)^3] \left\{ \int |f_0 + aw + b \bar{w} + G(w)](x,\tau) | e^{-c|x|} dx \right\} dt \\
& \leq & \int_t^\infty [1 + (\tau-t)^3] [\frac{C}{\tau^{M+\frac{d}{2}}} + \frac{C}{\tau^{M+\frac{d}{2}}} \| w(\tau) \|_{H^2} + C \| w(\tau) \|_{H^2}^2 ] d\tau.
\end{eqnarray*}
Hence, by assuming $M$, $N_1$ large enough and using a bootstrapping argument
\begin{eqnarray*}
\| \text{\eqref{eqn:contw1}} \|_{H^2} \leq \int_t^\infty [1 + (\tau-t)^3] \left[  \frac{C}{\tau^{2 N_1}} \right] d\tau < \frac{C}{t^{2N_1-4}} < \frac{C \delta}{t^{N_1}}.
\end{eqnarray*}

For the second part of this argument, we see
\begin{eqnarray*}
\| \text{\eqref{eqn:contw2}} \|_{H^2} & \leq & \int_t^\infty \| f_0 + aw + b \bar{w} + G(w)](\tau)\|_{H^2} d\tau \\
& \leq & \int_t^\infty \left\{ \frac{C}{\tau^{\frac{d}{2} p}} + \frac{C}{\tau^{\frac{d}{2} p}} \| w(\tau) \|_{H^2} + C \| w(\tau) \|_{H^2}^2 \right\} d\tau \\
& \leq & \int_t^\infty \left\{ \frac{C}{\tau^{\frac{d}{2} p}} + \frac{C}{\tau^{N_1 + \frac{d}{2} p}} + \frac{C}{\tau^{2N_1}} \right\} d\tau \\
& \leq & \frac{C}{t^{N_1+1}} \leq \frac{C \delta}{t^{N_1}},
\end{eqnarray*}
provided $\frac{d}{2} p$ is large enough.    

We are also be able to show 
\begin{eqnarray*}
\| w \|_{L^2( |x|^A dx)} \leq \frac{C}{t^{N_2}}
\end{eqnarray*}  
for $t > \frac{1}{\delta}$ and $N_2$ to be determined.  Then, we will have the desired contraction argument for the linear perturbation.  

From the necessary dispersive estimate given by
\begin{eqnarray*}
\| |x|^\alpha e^{itH} (P_S \phi) \|_{L^2} \leq C (1 + |t|^3) \int |\phi| e^{-c|x|} dx,
\end{eqnarray*}
the estimate for Equation \eqref{eqn:contw1} follows immediately from the $H^s$ argument.  

For Equation \eqref{eqn:contw2}, we need the following estimate
\begin{eqnarray*} 
\| |x|^\alpha e^{itH} (P_M \phi) \|_{L^2} \leq C \| |x|^\alpha \phi \|_{L^2} + C ( 1 + |t|^\alpha ) \| \phi \|_{H^\alpha}.
\end{eqnarray*}

Then,
\begin{eqnarray}
\label{eqn:contw3}
\| \text{\eqref{eqn:contw2}} \|_{L^2(|x|^A dx)} & \leq & C \int_t^\infty \| [f_0 + aw + b\bar{w} + G(w)](\tau) \|_{L^2(|x|^A dx)} d\tau \\
\label{eqn:contw4}
&+& \int_t^\infty (1+|t-\tau|^A) \| [f_0 + aw + b\bar{w} + G(w)](\tau) \|_{H^A} d\tau.
\end{eqnarray}

Again, we look at each integral separately.  For Equation \eqref{eqn:contw3},
\begin{eqnarray*}
\text{\eqref{eqn:contw3}} & \leq & \int_t^\infty \left\{ \frac{C}{\tau^{2N}} + \frac{C}{\tau^2} \|w(\tau)\|_{L^2(|x|^A dx)} + C \| w(\tau) \|_{L^\infty} \|w(\tau)\|_{L^2(|x|^A dx)} \right\} d\tau \\
& < & \left\{ \frac{C}{\tau^{2N}} + \frac{C}{\tau^{2+N1}} + \frac{C}{\tau^{N+N1}} \right \} d\tau \\
& < & \frac{C}{t^{N1+1}} < \frac{C \delta}{t^{N1}}.
\end{eqnarray*}

For \eqref{eqn:contw4}, 
\begin{eqnarray*}
 \text{\eqref{eqn:contw4}} & \leq & C \int_t^\infty [(1+(\tau-t)^A] \left\{ \frac{C}{\tau^{\frac{d}{2} p + A}} + C \| w(\tau)\|^2_{H^A} \right\} d\tau \\
& \leq & C \int_t^\infty \tau^A \left\{ \frac{C}{\tau^{2N_2}} + \frac{C}{\tau^{\frac{d}{2} p}} \right\} d\tau \\
& \leq & \frac{C}{t^{2 N_2-A-1}} < \frac{C \delta}{t^{N_2}},
\end{eqnarray*}
for $N_2$, $p$ sufficiently large.  

Hence, the contraction argument goes through and we have the desired bound on $\| w\|_X$.  



\section{Optimization for LS}
\label{thm1:s-opt}

We seek optimal values for the spaces and decay in the case where the perturbation solves the linear Schrodinger equation.  

To begin, allow $A$ and $M$ to be arbitrary for now and we will select them later.  Assume that $\phi \in X_A$ and that the first $2M$ moments of $\phi$ vanish.  By writing the linear solution in integral form, we see that
\begin{eqnarray*}
\| D^\alpha v_0 \|_{L^\infty} & \leq & \frac{C}{t^{\frac{d}{2}}}, \\
|e^{-c|x|} D^\alpha v_0 (x,t)| & \leq & \frac{C}{t^{\frac{d}{2}+M}},
\end{eqnarray*}
for $\alpha < M$.  

To gain in time decay for the linear Schr\"odinger equation, we make the assumption that 
\begin{eqnarray}
\label{eqn:moments1}
\int x^\alpha \phi(x) dx = 0,
\end{eqnarray}
where $\alpha$ is a multi-index where $|\alpha| \leq M$ for $M$ to be determined below.  Note that the function space $\mathcal{P}^A_1$ in Section \ref{thm:mainLS} is determined by functions $\phi \in X^A$ coupled with taking moments conditions for $|\alpha| \leq A$.

\begin{lem}
\label{lem1LS:thm1}
Since $R \in \mathcal{S}$,
\begin{eqnarray*}
\| (Rv_0)(t) \|_{H^s} \leq \frac{C}{t^M}.
\end{eqnarray*}
\end{lem}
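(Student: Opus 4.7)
The plan is to exploit the exponential localization provided by the Schwartz function $R$ and then invoke the weighted dispersive estimate $|e^{-c|x|} v_0(x,t)| \leq C t^{-d/2-M}$ stated just above the lemma. Since $R$ is a radial, exponentially decaying solution of \eqref{eqn:sol}, there is some $c_0>0$ with $|D^\gamma R(x)| \leq C_\gamma e^{-c_0|x|}$ for every multi-index $\gamma$; fix $0<c<c_0$ so that $\|D^\gamma R \cdot e^{c|x|}\|_{L^2}$ is finite for each $\gamma$.

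The $L^2$ bound is then immediate by Hölder:
\[
\|Rv_0\|_{L^2} \leq \|R\, e^{c|x|}\|_{L^2}\,\|e^{-c|x|} v_0\|_{L^\infty} \leq C t^{-d/2-M},
\]
which already dominates $t^{-M}$ since $d\geq 3$.

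For the full $H^s$ statement I would expand $D^\beta(Rv_0)$ by Leibniz into a finite sum of products $D^\gamma R \cdot D^{\beta-\gamma} v_0$ with $|\beta|\leq s$. Each factor $D^\gamma R$ is again Schwartz with the same exponential localization, while $D^{\beta-\gamma} v_0 = e^{-it}\,e^{it\Delta}(D^{\beta-\gamma}\phi)$ solves the linear Schr\"odinger equation with initial datum $D^{\beta-\gamma}\phi$. Integration by parts in $\int x^\alpha D^{\beta-\gamma}\phi\,dx$ shows that $D^{\beta-\gamma}\phi$ inherits from $\phi$ the vanishing of moments of order at most $M - |\beta-\gamma|$, so the same weighted decay estimate applies to $D^{\beta-\gamma} v_0$ with a correspondingly smaller (but still positive) power of $t$. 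Applying Hölder to each Leibniz term with weights $e^{\pm c|x|}$ and summing the finitely many contributions yields $\|Rv_0\|_{H^s} \lesssim t^{-d/2-M+s}$.

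The only point that requires attention is the bookkeeping of moments: to absorb the $s$ derivatives landing on $v_0$ one either restricts to $s\leq d/2$, strengthens the hypothesis on $\phi$ to moments of order $M+s$ (then relabels $M$), or, equivalently, shifts derivatives off $v_0$ onto $R$ through a duality pairing, which is permissible because $R$ is Schwartz. I expect no essential obstacle here — the lemma reduces mechanically to the weighted dispersive estimate already in hand, and the factor $R$ serves only as an exponential localizer that converts the $L^\infty$ weighted bound into an $L^2$-based bound at the cost of an inessential finite constant.
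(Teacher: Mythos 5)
Your argument is essentially the paper's: both split $Rv_0$ by H\"older into a weight absorbed by the Schwartz function $R$ times a weighted copy of $v_0$, bound the latter by the local-in-space decay estimate obtained from nonstationary phase away from the origin plus the vanishing-moments condition near it, and dispose of the higher $H^s$ derivatives by Leibniz "similarly." One small correction to your bookkeeping worry at the end: since $\int x^\alpha D^\gamma \phi\,dx$ equals, up to constants, the lower-order moment $\int x^{\alpha-\gamma}\phi\,dx$, derivatives of $\phi$ have \emph{more} vanishing moments rather than fewer, so no restriction on $s$ or strengthening of the hypotheses is actually needed for the terms where derivatives land on $v_0$.
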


\begin{proof}

We have
\begin{eqnarray*}
\| (R v_0) (t) \|_{L^2} \leq \| \langle x \rangle^{-N} v_0 \|_{L^\infty} \| \langle x \rangle^N R \|_{L^\infty}.
\end{eqnarray*}
Hence, using the principal of nonstationary phase away from the origin and the moments condition near the origin in the fundamental solution for linear Schr\"odinger, we gain in time decay.  Note that in order to gain in time decay away from the origin, it is essential that we have the weight in order to control all of the terms resulting from integrating by parts.  The higher derivative terms follow similarly.

\end{proof}

\begin{lem}
\label{lem2LS:thm1}
For $f_0$ described above for $v_0 = e^{i \Delta t} \phi$, we have
\begin{eqnarray*}
\| f_0 \|_{H^2} \leq \frac{C}{t^{\frac{d}{2} p}}
\end{eqnarray*}   
and
\begin{eqnarray*}
\| e^{-c|x|} f_0 \|_{L^\infty} \leq \frac{C}{t^{(\frac{d}{2}+M) p}}.
\end{eqnarray*}
\end{lem}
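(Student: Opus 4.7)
The plan is to exploit the pointwise bound $|f_0| \lesssim R^p|v_0| + |v_0|^{p+1}$ derived from Taylor expanding $F(|R+v_0|^2)(R+v_0) - F(R^2)R$ around $v_0 = 0$ together with the fact that $F(s) \sim s^{p/2}$ as $s \to 0$ (which gives $F(R^2) R \sim R^{p+1}$ for small $R$ and $F'(R^2)R^2 \sim R^p$ accordingly), and then to estimate each of the two pieces in $H^2$ and in the weighted $L^\infty$ norm separately.

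For the $H^2$ bound, I would handle the pure-power piece by a Moser-type nonlinear estimate,
\begin{eqnarray*}
\||v_0|^{p+1}\|_{H^2} \lesssim \|v_0\|_{L^\infty}^p \|v_0\|_{H^2},
\end{eqnarray*}
after which $\|v_0(t)\|_{H^2} = \|\phi\|_{H^2}$ (conservation under the free Schr\"odinger flow) and the dispersive estimate $\|v_0\|_{L^\infty} \leq C t^{-d/2}$ yield the claimed rate $t^{-(d/2)p}$. For the soliton-coupled piece $R^p v_0$, I would exploit the exponential decay of the soliton: $R^p \lesssim e^{-c'|x|}$ for some $c' > 0$, so Leibniz and the fact that every derivative of $R^p$ also decays exponentially reduce matters to controlling weighted quantities of the form $\|e^{-c|x|} D^\alpha v_0\|_{L^\infty}$. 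The free-Schr\"odinger analogue of Theorem~\ref{thm:wlindec}, proved by nonstationary phase in the fundamental solution using the vanishing of the first $2M$ moments of $\phi$ (as in Lemma~\ref{lem1LS:thm1}), gives $|e^{-c|x|} D^\alpha v_0(x,t)| \leq C t^{-d/2-M}$; by taking $M$ sufficiently large in terms of $p$, the $R^p v_0$ contribution is driven below the rate coming from the pure-power piece.

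For the weighted $L^\infty$ estimate, the weight $e^{-c|x|}$ localizes everything to a bounded region. Distributing the weight across all $p+1$ factors in the pure-power piece gives
\begin{eqnarray*}
\|e^{-c|x|} |v_0|^{p+1}\|_{L^\infty} \leq \|e^{-c|x|/(p+1)} v_0\|_{L^\infty}^{p+1} \leq C t^{-(d/2+M)(p+1)},
\end{eqnarray*}
via the same weighted dispersive bound with a smaller exponential weight, which is stronger than the stated $t^{-(d/2+M)p}$. The soliton-coupled piece uses the combined exponential decay of $R^p$ and the weighted estimate on $v_0$, and is again dominated by the moments-enhanced decay with $M$ taken large. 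The main obstacle is a careful bookkeeping of all the cross-terms produced by differentiating the composition $F(|R+v_0|^2)(R+v_0)$: this generates products of $F^{(k)}$ evaluated at $|R+v_0|^2$ with derivatives of $R$ and $v_0$, and since $F$ is only $C^1$ at the origin for type~$1$ nonlinearities with non-integer $p$, one must verify that the argument $|R+v_0|^2$ stays bounded away from the degenerate value, which is automatic here since $R > 0$ pointwise and $v_0$ is small.
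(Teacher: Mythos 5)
Your proposal is correct and follows essentially the same route as the paper: the same splitting of $f_0$ into the $O(R^p v_0)$ piece (handled via the exponential localization of $R$ and the moments-enhanced decay as in Lemma~\ref{lem1LS:thm1}) and the $O(|v_0|^{p+1})$ piece (handled via $\|v_0^{p+1}\|_{L^2}\leq\|v_0\|_{L^\infty}^p\|v_0\|_{L^2}$ together with the dispersive and conservation bounds). In fact your write-up is more complete than the paper's, which is silent on the derivative bookkeeping and the weighted $L^\infty$ estimate that you address explicitly.
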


\begin{proof}

The $O(R v_0)$ term is controlled by similar analysis to that in \ref{lem1LS:thm1}.  Hence, we concern ourselves with the $O(|v_0|^{p+1})$ term.  To that end, we have
\begin{eqnarray*}
\| v_0^{p+1} \|_{L^2} \leq \|  v_0 \|^p_{L^\infty} \| v_0 \|_{L^2}.
\end{eqnarray*}
Consequently, we have
\begin{eqnarray*}
\| v_0^{p+1} \|_{L^2} \lesssim \langle t \rangle^{-\frac{d}{2} p}.
\end{eqnarray*}

\end{proof}

Once again, since the decay rate is determined by the number of moments for the linear equation,
\begin{eqnarray*}
|e^{-c|x|} D^\alpha a(x,t)| & \leq & \frac{C}{t^{M+\frac{d}{2}}}, \\
|e^{-c|x|} D^\alpha b(x,t)| & \leq & \frac{C}{t^{M+\frac{d}{2}}}.
\end{eqnarray*}

As we desire to work with low regularity perturbations, let us simply assume that $\| w \|_{H^2}^2 \lesssim t^{-N}$.
Now, we must choose $A$ and $M$ optimally for the contraction argument to work.  From Equation \eqref{eqn:contw1}, we require that 
\begin{eqnarray*}
2N-4 & \geq & N, \\
\frac{d}{2} + M  - 4 & \geq & N,
\end{eqnarray*}
where the moments condition is determined by the $O(v_0 R^p)$ term.  So, we gather that $N > 4$ and $M > 8- \frac{d}{2}$.  The number of moments necessary will depend upon the the dimension $d$.  In $\reals^3$, we have $M > \frac{13}{2}$.  

From Equation \eqref{eqn:contw2}, we have only one more requirement
\begin{eqnarray*}
\frac{d}{2} p - 1 > N.
\end{eqnarray*}
At this stage, we see that given $N > 4$, we need $p > \frac{10}{3}$.  Clearly, the restrictions on $p$ lessen as $d$ gets large.  In particular, we cannot show the existence of stable perturbations for minimal mass solitons of NLS equations with nonlinearities of type $2$ in $\reals^3$.  A variation of this argument will be explored later to show long time stability under restricted perturbations.

\section{Linearization Scheme for $\mathcal{H}$-LS perturbations}
\label{thm1:lin}

Again, let 
\begin{eqnarray*}
u = R e^{it} + z_\phi e^{it} + w e^{it},
\end{eqnarray*}
except now we have
\begin{eqnarray}
\label{eqn:HLS}
i z_t + \mathcal{H} z & = & 0, \\
z(0,x) & = & \phi(x),
\end{eqnarray}
where $\mathcal{H}$ is linear operator resulting from linearizing about the minimal mass soliton.   We refer to Equation \eqref{eqn:HLS} as the $\mathcal{H}$linear Schr\"odinger equation ($\mathcal{H}$-LS).

Now, let $v_0 = z_\phi e^{-it}$.  Again, we have the same equation,
\begin{eqnarray*}
i v_t + \Delta v - v + F(|R+v|^2)(R+v) - F(R^2)R = 0,
\end{eqnarray*}
where $u(x,t) = e^{it} (R + v)$.

However, since 
\begin{eqnarray*}
i (v_0)_t + \Delta v_0 - v_0 + [F(R^2) + F'(R^2)R^2]v_0 + F'(R^2)R^2 \overline{v_0} = 0,
\end{eqnarray*}
we have 
\begin{eqnarray*}
i w_t & + &\Delta w - w + [F(|R+v_0+w|^2)(R+v_0+w) - F(R^2)R - (F(R^2)+F'(R^2)R^2)v_0\\
& - & F'(R^2)R^2\overline{v_0}] = 0.
\end{eqnarray*}

Hence, let
\begin{eqnarray*}
f_0 & = & F(|R+v_0|^2)(R+v_0) - F(R^2)R - (F(R^2)+F'(R^2)R^2)v_0 - F'(R^2)R^2\overline{v_0}, \\
a & = & [F(|R+v_0|^2) + F'(|R+v_0|^2)|R+v_0|^2] - [F(R^2) + F'(R^2)R^2],  \\
b & = &  F'(|R+v_0|^2)(R+v_0)^2 - F'(R^2) R^2, \\
G(w) & = & F(|R+v_0+w|^2) (R+v_0+w) - F(|R + v_0|^2)(R+v_0) \\
& - & [F(|R+v_0|^2) + F'(|R+v_0|^2)|R+v_0|^2] w - F'(|R+v_0|^2)(R+v_0)^2 \bar{w}.
\end{eqnarray*}

Hence, we now have
\begin{eqnarray*}
|f_0| & \lesssim & O(R^p |v_0|^2) + O(|v_0|^{p+1})| , \\
|a| & \lesssim & O(R^{p-1} |v_0|) + O(|v_0|^{p}), \\
|b| & \lesssim & O(R^{p-1} |v_0|) + O(|v_0|^{p}) \\
|G| & \lesssim & O( |w|^2 ).
\end{eqnarray*}
Notice that since the linear terms in $v_0$ have been removed from $f_0$, we expect to require fewer moments conditions for the contraction argument to hold.

\section{Optimization for $\mathcal{H}$-LS}
\label{thm1:Hopt}

In the scheme where we solve the linear perturbation using $\mathcal{H}$, we
have now introduced gain in the $f_0$ term.  Specifically, we now have
\begin{eqnarray*}
|f_0| \lesssim O(R |v_0|^2) + O(|v_0|^{p+1}).
\end{eqnarray*}  

For the $\mathcal{H}$-LS case, we use the moment conditions derived in \cite{Mlin} in order to gain decay in time locally in space.  Note that the function space $\mathcal{P}^A_2$ in Theorem \ref{thm:mainHLS} is determined by functions $\phi \in X^A$ coupled with taking moments conditions for $|\beta| \leq A$.

\begin{lem}
\label{lem1HLS:thm1}
Since $R \in \mathcal{S}$, if $v_0$ satisfies the first $M$ moments conditions from \cite{Mlin}
\begin{eqnarray*}
\| (Rv_0)(t) \|_{H^s} \leq \frac{C}{t^{\frac{d}{2}+M}}.
\end{eqnarray*}
\end{lem}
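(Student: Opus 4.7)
The plan is to reduce the estimate to the weighted dispersive bound in Theorem~\ref{thm:wlindec}, using the Schwartz decay of $R$ to supply the required exponential weight. Since $v_0 = e^{-it} e^{it\mathcal{H}} P_c \phi$ (the scalar phase is irrelevant for norms), and since by hypothesis $\phi$ satisfies condition \eqref{eqn:mom1} through order $M$, Theorem~\ref{thm:wlindec} gives
\begin{eqnarray*}
\| e^{-c|x|} v_0(\cdot,t) \|_{L^\infty} \;\leq\; C\, t^{-\frac{d}{2}-M}\, \|\phi\|_{L^{1,M}},
\end{eqnarray*}
for any fixed $c>0$. This weighted $L^\infty$ bound is exactly the improvement over the LS case that the $\mathcal{H}$-LS moments conditions from \cite{Mlin} buy us, and it is the engine of the lemma.

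First I would handle the $L^2$ estimate. Because $R \in \mathcal{S}$, one has $\|R\,e^{c|x|}\|_{L^2} < \infty$ for every $c>0$, so Cauchy--Schwarz (or just a pointwise product estimate) yields
\begin{eqnarray*}
\|(R v_0)(t)\|_{L^2} \;\leq\; \|R\, e^{c|x|}\|_{L^2}\cdot \| e^{-c|x|} v_0(\cdot,t)\|_{L^\infty} \;\leq\; \frac{C}{t^{\frac{d}{2}+M}}.
\end{eqnarray*}
This is the $s=0$ case. For general $s$, I would apply the Leibniz rule $D^\alpha(Rv_0)=\sum_{\beta \leq \alpha}\binom{\alpha}{\beta}(D^\beta R)(D^{\alpha-\beta} v_0)$. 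Every $D^\beta R$ is again Schwartz, so it contributes another exponential weight against $v_0$. To estimate $D^{\alpha-\beta} v_0$ in the weighted $L^\infty$ norm, I would apply Theorem~\ref{thm:wlindec} to derivatives of the evolution, writing $D^\gamma e^{it\mathcal{H}} P_c \phi$ through the distorted Fourier representation: derivatives pull out as polynomial factors in the spectral variable, so one gains the same $t^{-d/2-M}$ decay provided $\phi$ has enough regularity (absorbing the polynomial factors into the $L^{1,M}$ norm of a derivative of $\phi$). Summing over $\beta$ gives
\begin{eqnarray*}
\|(Rv_0)(t)\|_{H^s} \;\leq\; \frac{C}{t^{\frac{d}{2}+M}},
\end{eqnarray*}
with the constant depending on $R$, $s$, and $\|\phi\|_{\mathcal{P}^A_2}$.

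The main obstacle, as in the LS case, is that $e^{it\mathcal{H}}$ does not commute with $D^\alpha$, so one cannot simply pull derivatives through the propagator; however, since $R$ and its derivatives all decay exponentially, the exponential weight in Theorem~\ref{thm:wlindec} absorbs any commutator terms that appear when one differentiates the solution. In this respect the proof is strictly parallel to Lemma~\ref{lem1LS:thm1}: the role of nonstationary phase for the free Schr\"odinger equation is now played directly by the weighted decay estimate of Theorem~\ref{thm:wlindec}, and the Schwartz profile $R$ provides the weight for free.
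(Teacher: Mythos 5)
Your proposal is correct and follows essentially the same route as the paper: the decay comes from a spatially weighted $L^\infty$ bound on $v_0$ (which is exactly what the moments conditions buy via the distorted Fourier representation $e^{i\mathcal{H}t}P_c\phi = Q^{-1}e^{itW}Q\phi$, i.e.\ the content of Theorem~\ref{thm:wlindec}), paired against the rapid decay of $R\in\mathcal{S}$, with higher derivatives handled by Leibniz. The only cosmetic difference is that you cite Theorem~\ref{thm:wlindec} as a black box while the paper's proof re-invokes the underlying nonstationary-phase-plus-moments mechanism from \cite{Mlin}; the argument is the same.
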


\begin{proof}

We have
\begin{eqnarray*}
\| (R v_0) (t) \|_{L^2} \leq \| \langle x \rangle^{-N} v_0 \|_{L^\infty} \| \langle x \rangle^N R \|_{L^\infty}.
\end{eqnarray*}
Hence, using the principal of nonstationary phase away from the origin and the moments condition near the origin on
\begin{eqnarray}
\label{eqn:intrep}
e^{i \mathcal{H} t} P_c \phi = Q^{-1} e^{i t W} Q \phi,
\end{eqnarray}
as derived in \cite{Mlin}, we gain in time decay.  Note that in order to gain in time decay away from the origin, it is essential that we have the weight in order to control all of the terms resulting from integrating by parts.  The higher derivative terms follow similarly.

\end{proof}

\begin{lem}
\label{lem2HLS:thm1}
For $f_0$ described above for $v_0 = e^{i \mathcal{H} t} \phi$, we have
\begin{eqnarray*}
\| f_0 \|_{X_A} & \leq  & \frac{C}{t^{\frac{d}{2} p}},
\end{eqnarray*}
for $s<M$.   
\end{lem}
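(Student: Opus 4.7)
The starting point is the pointwise bound
$$|f_0| \lesssim R|v_0|^2 + |v_0|^{p+1},$$
which reflects that in the $\mathcal{H}$-LS scheme the linear-in-$v_0$ corrections are already absorbed by the free evolution. I would estimate each of the two terms separately in the three norms making up $\|\cdot\|_{X_A}$: the $L^2$, $H^A$, and $\||x|^A \cdot \|_{L^2}$ components.

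For the high-power term $|v_0|^{p+1}$, the plan is to combine the pointwise decay $\|v_0\|_{L^\infty} \lesssim t^{-d/2}$ from Theorem~\ref{thm:tdec} with the $L^2$-boundedness in Theorem~\ref{thm:disp1}(i) to obtain
$$\||v_0|^{p+1}\|_{L^2} \leq \|v_0\|_{L^\infty}^{\,p}\,\|v_0\|_{L^2} \lesssim t^{-dp/2} \|\phi\|_{L^2}.$$
For the $H^A$ piece I would use a Moser/Kato--Ponce product inequality together with Theorem~\ref{thm:disp1}(ii) to propagate $\|v_0\|_{H^A}$ essentially uniformly in time, yielding the same $t^{-dp/2}$ rate modulo one factor of $\|\phi\|_{H^A}$. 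For the weighted piece, Theorem~\ref{thm:disp1}(iv) gives $\||x|^A v_0\|_{L^2} \leq C(\||x|^A \phi\|_{L^2} + (1 + t^A)\|\phi\|_{H^A})$, so
$$\||x|^A |v_0|^{p+1}\|_{L^2} \lesssim \|v_0\|_{L^\infty}^{\,p} \||x|^A v_0\|_{L^2} \lesssim t^{-dp/2}(1 + t^A),$$
which collapses to $t^{-dp/2}$ once $dp/2$ exceeds $A$.

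For the $R|v_0|^2$ term, the idea is to exploit the exponential decay of $R$ coming from the elliptic equation \eqref{eqn:sol}. Writing $R \lesssim e^{-c|x|}$ and factoring, one reduces the estimate to the weighted dispersive bound of Theorem~\ref{thm:wlindec},
$$\|e^{-c|x|/2} v_0\|_{L^\infty} \lesssim t^{-d/2 - M}\|\phi\|_{L^{1,M}},$$
valid precisely under the $2M$ moments condition \eqref{eqn:mom1} that is built into $\phi \in \mathcal{P}^A_2$. Applying this to both copies of $v_0$ yields
$$\|R|v_0|^2\|_{L^2} \lesssim \|R e^{c|x|}\|_{L^2}\,\|e^{-c|x|/2} v_0\|_{L^\infty}^2 \lesssim t^{-d - 2M}.$$
The higher-regularity and weighted versions run analogously, since $|x|^A \partial^\beta R$ remains Schwartz for every multi-index $\beta$ and every $A$, so the weight can always be absorbed into the Schwartz factor.

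The main obstacle, and the reason for the qualifier $s < M$, is aligning the single prescribed rate $t^{-dp/2}$ uniformly across all pieces of $\|f_0\|_{X_A}$. The $R|v_0|^2$ contribution forces roughly $M \geq d(p-2)/4$ in order that $t^{-d-2M} \leq t^{-dp/2}$; simultaneously, the weighted bound on $|v_0|^{p+1}$ carries the growth factor $t^A$ which must be dominated by $t^{-dp/2}$, giving a second constraint on the interplay between $A$, the number of moments, and $p$ (matching the condition $p > 10/3$ identified in Section~\ref{thm1:s-opt}). The regularity of $F$ for type~$1$ nonlinearities with $p$ in this range is precisely what allows the Moser and Taylor-expansion steps above to be carried out without losing the extra gain in $|v_0|^2$ that distinguishes this lemma from its LS counterpart, Lemma~\ref{lem2LS:thm1}.
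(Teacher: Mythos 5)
Your proposal follows essentially the same route as the paper: split $f_0$ into the $O(R|v_0|^2)$ piece, controlled by the exponential localization of $R$ together with the weighted local-decay estimate under the moments condition (this is exactly Lemma~\ref{lem1HLS:thm1}), and the $O(|v_0|^{p+1})$ piece, controlled by $\| v_0\|_{L^\infty}^p\|v_0\|_{L^2}$ using the $t^{-d/2}$ pointwise decay and $L^2$ boundedness of $e^{i\mathcal{H}t}P_c$. You are in fact more explicit than the paper about the $H^A$ and weighted-$L^2$ components of the $X_A$ norm (the paper only treats the $L^2$ piece and defers the rest), though note that in your weighted estimate $t^{-dp/2}(1+t^A)$ only yields the rate $t^{-dp/2+A}$, not $t^{-dp/2}$, so the condition you want is that $\frac{d}{2}p - A$ still exceeds the decay rate actually needed downstream, not merely $\frac{d}{2}p > A$.
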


\begin{proof}

The $O(R v_0)$ term is controlled by similar analysis to that in \ref{lem1HLS:thm1}.  Hence, we concern ourselves with the $O(|v_0|^{p+1})$ term.  To that end, we have
\begin{eqnarray*}
\| v_0^{p+1} \|_{L^2} \leq \| v_0 \|^p_{L^\infty} \| v_0 \|_{L^2}.
\end{eqnarray*}
Using a similar analysis from Lemma \ref{lem2LS:thm1} on Equation \eqref{eqn:intrep}, we have
\begin{eqnarray*}
\| v_0^{p+1} \|_{L^2} \lesssim \langle t \rangle^{\frac{d}{2} p}
\end{eqnarray*}
using the $L^2$ boundedness results for $e^{i \mathcal{H} t}$.

\end{proof}

Now, since we are dealing with nonlinearities with minimal smoothness, we wish to run the contraction argument with minimal assumptions on $w(x,t)$.  Then, we assume
\begin{eqnarray*}
\| w(x,t) \|_{L^2}  <  \frac{1}{t^{N}},
\end{eqnarray*}
for some $N$ to be determined.  Then,
\begin{eqnarray}
\label{eqn:w1}
w(t) & = & -i \int_t^\infty e^{i(\tau-t)H} P_d [f_0 + aw + b \bar{w} + G(w)](\tau) d\tau \\
\label{eqn:w2}
& + & -i \int_t^\infty e^{i(\tau-t)H} P_c [f_0 + aw + b \bar{w} + G(w)](\tau) d\tau,
\end{eqnarray}
where $P_d$ projects onto the discrete part of the spectrum and $P_c$ projects onto the continuous part of the spectrum.  So, using the dispersive estimates, we have
\begin{eqnarray*}
\| \text{\eqref{eqn:w1}} \|_{H^2} & \leq & \| \int_t^\infty e^{i(\tau-t)H} P_S [f_0 + aw + b \bar{w} + G(w)](\tau) d\tau \|_{H^2} \\
& \leq & \int_t^\infty [1 + (\tau - t)^3] \left\{ \int | f_0 + aw + b \bar{w} G(w)](x,t) e^{-c|x|} dx \right\} d\tau \\
& \leq & \int_t^\infty [1 + (\tau - t)^3] \left\{ \int | f_0 + aw + b \bar{w} G(w)](x,t) e^{-c|x|} dx \right\} d\tau \\
& \lesssim & \int_t^\infty [1 + (\tau - t)^3] \left\{ \frac{1}{\tau^{2N}} + \frac{1}{\tau^N} \| w(\tau) \|_{L^2} + \| w(\tau) \|_{L^2}^2 \right\} d\tau \\
& \lesssim & \frac{1}{t^{(\frac{d}{2}+M)2-4}} + \frac{1}{t^{(\frac{d}{2}+M)p+N-4}} + \frac{1}{t^{2N-4}} , \\
\| \text{\eqref{eqn:w2}} \|_{L^2} & \leq & \| \int_t^\infty e^{i(\tau-t)H} P_M [f_0 + aw + b \bar{w} + G(w)](\tau) d\tau \|_{L^2} \\
 & \leq & \int_t^\infty \| \int | f_0 + aw + b \bar{w} G(w)](x,t) e^{-c|x|} dx \|_{L^2} d\tau \\
& \leq & \left\{ \frac{1}{\tau^{\frac{d}{2} p}} + \frac{1}{\tau^N} \| w(\tau) \|_{H^2} + \| w(\tau) \|_{H^2}^2 \right\} \\
& \lesssim & \frac{1}{t^{2N-1}} + \frac{1}{t^{M+N-1}} + \frac{1}{t^{2M-1}}.
\end{eqnarray*}
Hence, we require once again that that $2N-4 \geq N$, but the moments condition is determined now by the $O( v_0^2 R^{p-1})$ term, so we have
\begin{eqnarray*}
d + 2M -4 > 4.
\end{eqnarray*}
In $\reals^3$ that $M > \frac{5}{2}$, or $M > 2$. The condition on $p$ however does not change whatsoever, therefore are again only considering nonlinearities of type $1$ with $p > \frac{10}{3}$.


\section{Manifolds of Perturbations}
\label{thm1:man}

From Theorems \ref{thm:mainLS} and \ref{thm:mainHLS}, we would like to know that our perturbative solution actually lives on a finite codimension submanifold.  Specifically, given spaces $X_1$, $X_2$ with $X_1 \subset X_2$ and norms $\| \cdot \|_{X_1}$, $\| \cdot \|_{X_2}$ respectively, we require a finite codimension subset $\mathcal{S} \subset X_1$ and a map $\Psi : \mathcal{B} \cap \mathcal{S} \to X_2$ where
\begin{eqnarray*}
\mathcal{B} = \left\{ \phi \in X_1 | \| \phi \|_{X_1} < \delta \right\}.
\end{eqnarray*}
For Theorems \ref{thm:mainLS} and \ref{thm:mainHLS} above, we have 
\begin{eqnarray}
\label{eqn:psi}
\Psi ( \phi) = w(t_0).
\end{eqnarray}

\begin{lem}
\label{lem:man}
For the map $\Psi$ defined by \eqref{eqn:psi}, we have
\begin{eqnarray}
\label{eqn:man1}
\| \Psi (\phi) \|_{X_2} & \lesssim & \| \phi \|_{X_1}^2, \ \phi \in \mathcal{B} \cap \mathcal{S}, \\
\label{eqn:man2}
\| \Psi (\phi_1) - \Psi (\phi_2) \|_{X_2} & \lesssim & \delta \| \phi_1 - \phi_2 \|_{X_1}, \  \phi_1, \phi_2 \in \mathcal{B} \cap \mathcal{S},
\end{eqnarray}
where 
\begin{eqnarray}
\label{space:X1}
X_1 & = & L^2 (|x|^{3+} dx) \cap H^2, \\
\label{space:X2}
X_2 & = & H^2,
\end{eqnarray}
and 
\begin{eqnarray}
\mathcal{S} = \{ \phi \in H^2 | \phi = P_c \phi, \ \phi \in \mathcal{P}_2^A \}
\end{eqnarray}
for some $A > 2$.
\end{lem}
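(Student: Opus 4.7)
The plan is to reinterpret the contraction argument of Section \ref{thm1:Hopt} as producing a map from initial data to solutions, track dependencies on $\|\phi\|_{X_1}$ through every term, and then compare two such solutions.

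For the quadratic bound \eqref{eqn:man1}: let $v_0 = e^{i\mathcal{H}t}\phi$ and recall that $w = w[\phi]$ is the unique fixed point, in a suitable complete metric space of temporally decaying $H^2$ functions, of
\begin{eqnarray*}
T_\phi(w)(t) = -i \int_t^\infty e^{i(\tau-t)\mathcal{H}}\bigl[f_0 + a w + b \bar w + G(w)\bigr](\tau)\,d\tau,
\end{eqnarray*}
with $\Psi(\phi) = w[\phi](t_0)$. Because the $\mathcal{H}$-LS linearization subtracts the linear-in-$v_0$ piece into the propagator, one has $|f_0| \lesssim R|v_0|^2 + |v_0|^{p+1}$, which is quadratic in $v_0$. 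Theorem \ref{thm:wlindec} applied to $\phi \in \mathcal{P}^A_2$ gives $\|e^{-c|x|}v_0(\tau)\|_{L^\infty} \lesssim \langle\tau\rangle^{-d/2-A}\|\phi\|_{X_1}$ and $\|v_0(\tau)\|_{L^\infty} \lesssim \langle\tau\rangle^{-d/2}\|\phi\|_{X_1}$. Inserting the ansatz $\|w(\tau)\|_{H^2} \le C\|\phi\|_{X_1}^2\langle\tau\rangle^{-N}$ into $T_\phi(w)$ and rerunning the dispersive estimates of Section \ref{thm1:Hopt} line by line, each term in the integrand carries at least two factors of $\|\phi\|_{X_1}$: the $aw$ and $b\bar w$ terms contribute one via $|a|,|b| \lesssim R^{p-1}|v_0|+|v_0|^p$ and another from $w$, while $G(w)$ is quadratic in $w$. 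For $A > 2$ and $p > 10/3$ the time integrals converge and evaluation at $t_0$ yields \eqref{eqn:man1}.

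For the Lipschitz estimate \eqref{eqn:man2}: write $w_j = w[\phi_j]$ and denote differences by $D$, so $Dw = w_1 - w_2$, $D\phi = \phi_1 - \phi_2$, $Dv_0 = e^{i\mathcal{H}t}D\phi$, and analogously $Df_0$, $Da$, $Db$, $DG$. Subtracting the fixed-point equations for $w_1$ and $w_2$ produces
\begin{eqnarray*}
Dw(t) = -i \int_t^\infty e^{i(\tau-t)\mathcal{H}}\bigl[Df_0 + a_1 Dw + b_1 \overline{Dw} + (Da)w_2 + (Db)\overline{w_2} + DG\bigr](\tau)\,d\tau.
\end{eqnarray*}
By the mean value theorem applied to $F$ and $F'$ — which are $C^2$ at $z=0$ for both nonlinearity types, as shown in Section \ref{thm1:prelim} — each difference term is controlled by $\|D\phi\|_{X_1}$ with an amplitude of size $\|\phi_j\|_{X_1} \le \delta$. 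The same dispersive bookkeeping as in the proof of \eqref{eqn:man1} then gives
\begin{eqnarray*}
\|Dw(t)\|_{H^2} \lesssim \delta\|D\phi\|_{X_1} + \delta\|Dw\|_{L^\infty_t H^2_x},
\end{eqnarray*}
and smallness of $\delta$ absorbs the last term into the left-hand side to yield \eqref{eqn:man2}.

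The main obstacle is the second-order bookkeeping in the Lipschitz step: one must expand $f_0$, $a$, $b$ and $G$ to second order about $R^2$, check that the Taylor remainders produce the correct quadratic/linear structure in $(v_0, w, D\phi, Dw)$, and verify that the coefficients, which involve $F'$ and $F''$ evaluated at $|R + v_0|^2$, remain bounded and carry sufficient time decay both on the support of $R$ (where the behavior of $F''$ at $0$ is harmless) and far from it (where $v_0$ is small in $L^\infty$). The $C^2$ regularity of the full nonlinearity at $z=0$ noted in Section \ref{thm1:prelim}, together with $p > 10/3$, furnishes exactly the estimates required.
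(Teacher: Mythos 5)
Your proposal is correct and follows essentially the same route as the paper: a bootstrap on the Duhamel representation exploiting that the $\mathcal{H}$-LS choice makes $f_0$ quadratic in $v_0$ (hence in $\phi$) for \eqref{eqn:man1}, and a difference-of-fixed-points argument with Lipschitz control of the nonlinear coefficients for \eqref{eqn:man2}. Your version is in fact slightly more careful than the paper's in the Lipschitz step, since you explicitly retain the cross terms $(Da)w_2$ and $(Db)\overline{w_2}$ coming from the $\phi$-dependence of $a$ and $b$, which the paper's displayed difference equation suppresses.
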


\begin{rem}
In Lemma \ref{lem:man}, Equation \eqref{eqn:man1} shows that the tangent space at $0$ of the stable submanifold, $\mathcal{M}$, is the space $\mathcal{S}$, while Equation \eqref{eqn:man2} shows that $\mathcal{M}$ is given by a Lipschitz parametrization.  
\end{rem}

\begin{rem}
The codimension of $\mathcal{S}$ will be at most $2d+4$ since $H^1 \times H^1 = N_g(\mathcal{H}) \oplus \{ N_g(\mathcal{H}^*) \}^\perp$ and $N_g (\mathcal{H}) = 2d + 4$.  It is possible that the size of $\mathcal{S}$ can be improved beyond this codimension, which the author will explore in future work. 
\end{rem}  

\begin{proof}[Proof of Lemma \ref{lem:man}]

Let us first prove \eqref{eqn:man1}.  Assume that $ \| w \|_{X_2} \leq \| \phi \|_{X_1}^2$.  Then,
\begin{eqnarray*}
\| w \|_{X_2} & \leq & \int_{t_0}^\infty(1 + (t_0 - \tau)^3 )  \left[ \int v_0^2 e^{-c|x|} dx + \int v_0^2 e^{-c|x|} dx + \int v_0^2 e^{-c|x|} dx \right] d\tau \\
& + & \int_{t_0}^\infty  \left[ \| f_0 \|_{X_2} + \| a w \|_{X_2} + \| b \bar{w} \|_{X_2} + \| G(w) \|_{X_2} \right] d\tau \\
& \lesssim & t_0^{- \epsilon} \| \phi \|^2_{X_1}
\end{eqnarray*}
using our assumptions as well as the decay of $\| w \|_{H^2}$ from the proof of Theorem \ref{thm:mainHLS}.  Hence, taking $t_0$ to be large, the result follows.

Now, for \eqref{eqn:man2}, we have
\begin{eqnarray*}
w_1 - w_2 = \int_{t_0}^\infty e^{i \mathcal{H} (\tau - t_0)} [ (f_0^1 - f_0^2) + a (w_1 - w_2) + b (\bar{w}_1 - \bar{w}_2) + (G(w_1) - G(w_2)) ] d\tau.
\end{eqnarray*}
Since
\begin{eqnarray*}
|G(w_1) - G(w_2)| \sim |w_1 + w_2| |w_1 - w_2|
\end{eqnarray*}
and
\begin{eqnarray*}
|f_0^1 - f_0^2| \sim R |\phi_1 + \phi_2| |\phi_1 - \phi_2| + (|\phi_1|^p + |\phi_2|^p ) |\phi_1 - \phi_2|,
\end{eqnarray*}
the result follows from a similar continuity argument to that above using \eqref{eqn:man1}.  
\end{proof}

\begin{rem}
Note that for $p$ large enough in type $1$ nonlinearities, using the dispersive estimates $(iv)$, $(v)$ from Theorem \ref{thm:disp1} and the fact that 
\begin{eqnarray*}
\| \phi \|_{L^1} \leq C(\epsilon) \| \phi \langle x \rangle^{d+\epsilon} \|,
\end{eqnarray*}
we can take $X_1 = X_2 = X^{d+}$ to have a true manifold of perturbations.
\end{rem}

\section{Long Time Analysis for Type $2$ Nonlinearities}
\label{thm1:longtime}

For NLS with saturated nonlinearities of type $2$, we can no longer do the global scattering analysis from above.  Instead, we have Theorem \ref{thm:longtime}:

\begin{thm}
Given Equation \eqref{eqn:NLS}, where $\beta$ is an admissible saturated nonlinearity of type $2$, for any $\phi = P_c \phi \in W^{2,1} \cap H^2$ with $\| \phi \|_{W^{2,1} \cap H^2} < \delta < 1$, Equation \eqref{eqn:NLS} has a solution for $t \in [0,\left( 2 \delta \right)^{-\frac{1}{4}} )$ of the form 
\begin{eqnarray*}
u(x,t) = R_{min} + v(t) = R_{min} + e^{i \mathcal{H} t} \phi + w(x,t),
\end{eqnarray*}
where 
\begin{eqnarray*}
u(x,0) = R_{min} + \phi.
\end{eqnarray*} 
\end{thm}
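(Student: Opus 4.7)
\textbf{Setup.} The plan is to reduce to essentially the same integral equation used in Sections \ref{thm1:lin}--\ref{thm1:Hopt}, but to run the contraction \emph{forward} in time from $t=0$ rather than backward from $t=\infty$. Write $u=e^{it}(R+v_0+w)$ where $v_0$ solves the linear $\mathcal{H}$-LS equation with $v_0(0)=\phi$ (absorbing the $e^{it}$ factor into the splitting used in the theorem statement), and $w(0)=0$ is forced by $u(x,0)=R_{min}+\phi$. The same expansion as in Section \ref{thm1:lin} yields
\begin{eqnarray*}
i w_t - \mathcal{H} w + f_0 + a w + b \bar w + G(w) = 0, \qquad w(0)=0,
\end{eqnarray*}
with $|f_0|\lesssim R|v_0|^2+|v_0|^3$, $|a|,|b|\lesssim |v_0|$, and $|G(w)|\lesssim |w|^2$. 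Since $F$ is $C^\infty$ in type $2$, these estimates extend to the $H^2$ level by the algebra property of $H^2(\mathbb{R}^3)$.

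\textbf{Forward Duhamel and fixed point.} Since $w(0)=0$, the Duhamel formula reads
\begin{eqnarray*}
w(t) = -i \int_0^t e^{i(t-\tau)\mathcal{H}}\,[f_0 + a w + b \bar w + G(w)](\tau)\, d\tau,
\end{eqnarray*}
which I will regard as a fixed-point equation $w=\mathcal{T}(w)$ on the closed ball
\begin{eqnarray*}
\mathcal{B}_{T,\epsilon}=\{w\in C([0,T];H^2): \ \|w\|_{L^\infty_T H^2}\le \epsilon\},
\end{eqnarray*}
with $T=(2\delta)^{-1/4}$ and $\epsilon\sim \delta^{7/4}$. Using Theorem \ref{thm:disp1}(ii) to bound $\|e^{i\tau\mathcal{H}}P_c\cdot\|_{H^2}$, Theorem \ref{thm:disp1}(iii) and $\int e^{-c|x|}|\cdot|\lesssim \|\cdot\|_{H^2}$ to bound the discrete part (which is generated only by the source terms through the nonlinear iteration), and the $L^2$-boundedness of $e^{i\tau\mathcal{H}}$, the key estimates are
\begin{eqnarray*}
\|f_0\|_{H^2}\lesssim \|v_0\|_{H^2}^{2}\lesssim \delta^{2}, \quad \|a w\|_{H^2}+\|b\bar w\|_{H^2}\lesssim \delta\,\|w\|_{H^2}, \quad \|G(w)\|_{H^2}\lesssim \|w\|_{H^2}^{2}.
\end{eqnarray*}
Integrating in $\tau\in[0,T]$ and inserting the ball radius gives
\begin{eqnarray*}
\|\mathcal{T}(w)\|_{L^\infty_T H^2} \lesssim T\delta^{2} + T\delta\,\epsilon + T\epsilon^{2} \lesssim (2)^{-1/4}\delta^{7/4}+\delta^{3/4}\epsilon+\delta^{-1/4}\epsilon^{2},
\end{eqnarray*}
which is at most $\epsilon$ provided $\delta$ is small and $\epsilon\sim\delta^{7/4}$. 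A parallel computation on $\mathcal{T}(w_1)-\mathcal{T}(w_2)$, using $|G(w_1)-G(w_2)|\lesssim(|w_1|+|w_2|)|w_1-w_2|$ and $|f_0^{(1)}-f_0^{(2)}|\lesssim R|\phi_1+\phi_2||\phi_1-\phi_2|+(|\phi_i|^{2})|\phi_1-\phi_2|$, produces a Lipschitz constant of order $T\delta\sim\delta^{3/4}\ll 1$, giving the contraction.

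\textbf{Controlling the discrete component.} The one subtlety is Theorem \ref{thm:disp1}(iii), which contributes a factor $1+|t-\tau|^{3}$ when $e^{i(t-\tau)\mathcal{H}}$ hits the $P_d$ part of the source. On $[0,T]$ this factor is bounded by $1+T^3\lesssim \delta^{-3/4}$. The source contains a factor $e^{-c|x|}\lesssim 1$, but evaluated on $R$-localized terms the relevant norm is $\int e^{-c|x|}|\cdot|\,dx\lesssim\|\cdot\|_{H^2}\lesssim\delta^{2}$, so $\|P_d w\|_{H^2}\lesssim T\cdot T^{3}\cdot \delta^{2}= T^{4}\delta^{2}= \tfrac12$. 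Since this constant is strictly less than $1$, one can arrange the iteration so that the generated discrete component remains controlled; this is precisely the origin of the threshold $T^4\delta=\tfrac12$, i.e.\ $T=(2\delta)^{-1/4}$.

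\textbf{Main obstacle.} Unlike type $1$, the type $2$ nonlinearity behaves linearly at zero, so $f_0$ has only the factor $|v_0|^2$ without an additional $|v_0|^{p-1}$; consequently, $\|f_0\|_{H^2}$ does not decay in time via any supercritical margin. The integrand in the Duhamel formula is of constant size $\delta^2$, and polynomial growth of $e^{it\mathcal{H}}$ on the discrete spectrum (Theorem \ref{thm:disp1}(iii)) compounds this. Balancing the polynomial blow-up factor $T^3$ of the discrete propagator against the $\delta^2$ size of the source is what forces $T^4\delta\lesssim 1$ and prevents extension to a global result. Everything else (algebra property, smoothness of $F$ for type $2$, and small-data contraction) is routine once this balance is accepted.
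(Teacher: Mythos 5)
Your overall skeleton matches the paper's: forward Duhamel from $t=0$, a $P_d$/$P_c$ splitting with the $(1+|t-\tau|^3)$ growth on the discrete part, and closing a smallness argument on $[0,T]$ with $T^4\delta\sim 1$. But there is a genuine gap in how you treat the source term $f_0$, and it breaks your fixed-point scheme as stated. You bound $\|f_0(\tau)\|_{H^2}\lesssim\|v_0(\tau)\|_{H^2}^2\lesssim\delta^2$ uniformly in $\tau$, never using the $L^\infty$ dispersive decay of $v_0=e^{i\mathcal{H}\tau}\phi$. With that uniform bound, the $P_d$ contribution of $f_0$ is
\begin{eqnarray*}
\int_0^T\bigl[1+(T-\tau)^3\bigr]\,\delta^2\,d\tau\ \sim\ T^4\delta^2\ =\ \tfrac{\delta}{2},
\end{eqnarray*}
(note: $T^4\delta^2=\delta/2$, not $1/2$ as you wrote), and $\delta/2\gg\delta^{7/4}$, so $\mathcal{T}$ does not map your ball $\mathcal{B}_{T,\epsilon}$ with $\epsilon\sim\delta^{7/4}$ into itself. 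The remark that ``one can arrange the iteration so that the generated discrete component remains controlled'' is not an argument; as written the contraction does not close. Relatedly, your ``main obstacle'' paragraph asserts that the integrand is of constant size $\delta^2$ and that $f_0$ has no time decay for type $2$ — this is precisely what the paper avoids, and it is why the hypothesis is $\phi\in W^{2,1}\cap H^2$ rather than just $H^2$ (a hypothesis your proof never uses, which is a warning sign).

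The paper's proof runs the same bootstrap but inserts $\|v_0(\tau)\|_{L^\infty}\lesssim\langle\tau\rangle^{-d/2}\|\phi\|_{W^{2,1}}$ from Theorem \ref{thm:tdec} into every occurrence of $v_0$, so that $\int e^{-c|x|}|f_0|\,dx\lesssim\langle\tau\rangle^{-3d/2}\|\phi\|_{L^1}^3$ (and $\|f_0\|_{H^2}\lesssim\langle\tau\rangle^{-d}\|\phi\|_{L^1}^2\|\phi\|_{H^2}$, $|a|,|b|\lesssim\langle\tau\rangle^{-d/2}\|\phi\|_{L^1}$). The time decay of the integrand then beats the $(T-\tau)^3$ growth well enough that the source contributes $o(\delta)$, and the only term that saturates the threshold is the genuinely quadratic-in-$w$ one, $T^4\|w\|_{L^\infty_TH^2}^2$, whose absorption into the left-hand side is what actually forces $T^4\delta\lesssim 1$, i.e., $T=(2\delta)^{-1/4}$. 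To repair your argument, either import the dispersive decay of $v_0$ as above, or abandon the radius $\delta^{7/4}$ and rebalance the ball — but in the latter case you must check that the $P_d$ source term of size $\sim\delta$ and the $T^4\epsilon^2$ term can be simultaneously absorbed, which is borderline without the decay and is exactly the estimate you have skipped.
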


\begin{proof}
Instead of the scattering point of view, we look at solving for the perturbation forward in time.  Namely, we have
\begin{eqnarray*}
w(t) & = & i \int_0^t e^{ i \mathcal{H} (t-\tau)} [f_0 + aw + b \bar{w} + G(w)](\tau) d\tau \\
& = & i \int_0^t e^{ i \mathcal{H} (t-\tau)} P_d [f_0 + aw + b \bar{w} + G(w)](\tau) d\tau \\
& + & i \int_0^t e^{ i \mathcal{H} (t-\tau)} P_c [f_0 + aw + b \bar{w} + G(w)](\tau) d\tau.
\end{eqnarray*}

Let us assume that \begin{eqnarray}
\| w \|_{L^\infty [0,T] H^2} & < & \delta.
\end{eqnarray}
Then
\begin{eqnarray*}
\| w \|_{L^\infty [0,T] H^2} & \lesssim & \left\| \int_0^t [1 + (t-\tau)^3] \left( \int f_0 e^{-c|x|} dx + \int a w e^{-c|x|} dx \right. \right. \\
& + & \left. \int b \bar{w} e^{-c|x|} dx + \int G(w) e^{-c|x|} dx \right) d \tau  \\
& + & \left. \int_0^t [ \| f_0 \|_{H^2} + \| a w \|_{H^2} + \| b \bar{w} \|_{H^2} + \| G(w) \|_{H^2} ] d \tau \right\|_{L^\infty_t [0,T]} \\
& \lesssim & \left\| \int_0^t [1 + (t-\tau)^3] \left( \langle \tau \rangle^{-3(\frac{d}{2})} \| \phi \|_{L^1}^3  \right. \right.\\
& + & \left. \langle \tau \rangle^{-(\frac{d}{2})} \| w \|_{H^2} \| \phi \|_{L^1} + \| w \|_{H^2}^2 \right) d \tau  \\
& + & \left. \int_0^t [ \langle \tau \rangle^{-d} \| \phi \|_{L^1}^2 \| \phi \|_{H^2} + \langle \tau \rangle^{-\frac{d}{2}} \| \phi \|_{L^1} \| w \|_{H^2} + \| w \|^2_{H^2} ] d \tau \right\|_{L^\infty_t [0,T]} \\
& \lesssim & T^4 \| w \|^2_{L^\infty [0,T] H^2} + T^{\frac{5}{2}} \| \phi \|_{L^1}^2 \| w \|_{L^\infty [0,T] H^2} + T^{-\frac{1}{2}} \| \phi \|^3_{L^1} \\
& \lesssim & \frac{\delta}{2} \| w \|_{L^\infty [0,T] H^2} + \delta^{3+\frac{1}{8}}.
\end{eqnarray*}
Hence, by a continuity argument, $w$ exists on $[0,T]$ with $\| w \|_{L^\infty [0,T] H^2} \leq \delta^{3+\frac{1}{8}}$.
\end{proof}

\begin{rem}
Since we assume $\| \phi \|_{L^1} \leq \delta$, there exists a $\delta_0$ independent of $\phi$ such that after each time step $T$, the initial perturbation $\phi(T)$ is of size $\delta < \delta_0$ in $W^{2,1} \cap H^2$.  Hence, we can continue this iteration from $T$ to $NT$ for any $N > 0$.  Thus, there exists a small perturbative solution of the minimal mass soliton, however we are unable to show the lower order terms disperse as $t \to \infty$.  If by using the ideal scaling of $\| w \| \leq \delta^{3+}$, then we see we have existence on a time scale of $T \approx \delta^{-\frac{3}{4}-}$ and $\| w \| \lesssim \delta^{3 + \frac{3}{8} + }$.  Once again, due to the gain in $\| w \|$, this argument can be continued for all $t$, though we cannot prove dispersion of $w$.
\end{rem}

\end{document}